\documentclass[12pt,fleqn]{amsart}

\usepackage{color}
\usepackage{enumerate}
\usepackage{amssymb}
\usepackage{amsmath}

\newcommand{\cA}{\mathcal A}
\newcommand{\fA}{\mathfrak A}
\newcommand{\cB}{\mathcal B}

\newcommand{\cF}{\mathcal F}

\newcommand{\sub}{\subseteq}

\newcommand{\vf}{\varphi}
\newcommand{\eps}{\varepsilon}
\newcommand{\MA}{\mbox{\sf MA}}

\newcommand{\JL}{\protect{\rm JL}}
\newcommand{\To}{\longrightarrow}

\usepackage{graphicx} 
\usepackage{amsfonts}
\usepackage{amsthm}
\usepackage{enumerate}
\usepackage{tikz-cd}
\usepackage{amscd}
\usepackage{hyperref}
\usepackage{mathrsfs}
\usepackage{xcolor}
\usepackage{setspace}

\newtheorem{theorem}{Theorem}[section]

\newtheorem{corollary}[theorem]{Corollary}
\newtheorem{lemma}[theorem]{Lemma}

\newtheorem{problem}[theorem]{Problem}

\theoremstyle{definition}
\newtheorem{definition}[theorem]{Definition}

\newtheorem{remark}[theorem]{Remark}

\newcommand{\sfrown}{^{\scalebox{0.7}[0.7]{$\frown$}}}
\newcommand{\lev}{\protect{\rm Lev}}
\newcommand{\sm}{\setminus}
\newcommand{\odeg}{\protect{\rm odeg}}
\newcommand{\cV}{\mathcal V}

\newcommand{\norm}[1]{\left\lVert#1\right\rVert}
\newcommand{\abs}[1]{\left\lvert#1\right\rvert}

\newcommand{\suc}{\protect{\rm suc}}

\title[AD families and JL-spaces]{On almost disjoint families\\ and Johnson-Lindenstrauss Spaces}
\author[A. Avil\'{e}s]{Antonio  Avil\'{e}s}
\address{Departamento de Matem\'{a}ticas\\
Facultad de Matem\'{a}ticas\\ Universidad de Murcia\\ 30100 Espinardo, Murcia\\
Spain} \email{avileslo@um.es}
\author[G.\ Plebanek]{Grzegorz  Plebanek}
\address{Instytut Matematyczny, Uniwersytet Wroc\l awski}
\email{grzegorz.plebanek@math.uni.wroc.pl}

\author[L.\  Sáenz]{Luis Sáenz}
\address{Centro de Ciencias Matem\'aticas, UNAM
}
\email{luisdavidr@ciencias.unam.mx}
\date{\today}
\keywords{almost disjoint family, Alexandrov-Urysohn compactum,  Banach space of continuous functions, Johnson-Lindenstrauss space}

\subjclass[2020]{Primary 46B25, 46E15; Secondary 03E05}

\thanks{Authors supported by Fundaci\'{o}n S\'{e}neca - ACyT Regi\'{o}n de Murcia project 21955/PI/22, Avil\'{e}s supported by ERDF and MICIU/AEI/ 10.13039/501100011033 project PID2021-122126NB-C32.
Sáenz supported by SECIHTI grant CBF-2025-I-898.}

\begin{document}

\begin{abstract}
Every almost disjoint family $\cA$ of infinite subsets of $\omega$ gives rise to a scattered compact
space $K_\cA$ and a family of Johnson-Lindenstrauss spaces $\JL_p(\cA)$.
We consider almost disjoint families arising from  finitely branching trees and investigate whether they
define homeomorphic compacta and isomorphic Banach spaces. 
\end{abstract}
\maketitle

\section{Introduction}

A family $\mathcal{A}$ of infinite subsets of a countable set $N$ is almost disjoint if the intersection of every two different sets in $\mathcal{A}$ is always finite. Associated to such a family, we have the following objects:

\begin{itemize}
	\item The topological space $\psi(A) = N \cup \mathcal{A}$ where points in $N$ are isolated, and the neighborhoods of $A\in \mathcal{A}$ are the sets $V$ such that $A\in V$ and $N\cap V\setminus A$ is finite.

	\item The compact space $K_\mathcal{A} = \psi(A)\cup\{\infty\}$, the one-point compactification of $\psi(A)$.
	
	\item The Banach space of continuous functions $C(K_\mathcal{A})$, also known as the Johnson-Lindenstrauss space $\JL(\cA)$ or $\JL_\infty(\cA)$.
	
	\item The Johnson-Lindenstrauss space $\JL_p(\cA)$, for $p\in (1,\infty]$, a Banach space obtained from $\JL(\cA)$ by considering a new norm in which $\ell_p$-sums of values at points $A\in\cA$ are also taken into account. 
	(This construction can also be applied with $p=1$ but then the resulting space is, in a sense, trivial;  see section \ref{3}.)
	\end{itemize}

The spaces $\psi(A)$ and $K_\mathcal{A}$, first considered by Alexandroff and Urysohn and sometimes called Isbell-Mr\'{o}wka spaces, are well known animals in the fauna of general topology, cf. \cite{HHHsurvey, Hr14}. The spaces $\JL_p(\cA)$ were introduced in 
\cite{JLoriginal}, see also \cite{YostOriginal}, and have also been studied as a rich source of counter-examples \cite{FKK13,KL21,Avilésarticle,GHK23}. They played  a crucial role in proving  the main result of  \cite{AMP20} (on twisted sums of $c_0$)
and in \cite{PS23} which gives  the first construction of a complemented subspace of a $C(K)$-space
which is not a $C(K)$-space itself. For many purposes, the choice of the family $\cA$ is not relevant, but it is also often the case that $K_\mathcal{A}$ and $\JL_p(\cA)$ will display different behaviors, and serve as counterexamples to different conjectures, depending on the properties of the underlying family $\cA$. There are, indeed, $2^\mathfrak{c}$ many non-isomorphic Banach spaces of the form $\JL(\cA)$ \cite{CABELLOSANCHEZ2020108571}. The way in which this variety usually appears in the literature is either by general counting arguments, or by the construction of families with peculiar combinatorial properties using set-theoretic techniques. 

In this paper, we turn our attention to the most concrete almost disjoint families that one can naturally define without recurring to transfinite methods, and we wonder how different their spaces $K_\mathcal{A}$ and $\JL_p(\cA)$ could be. The epitome of such a family is the set $\cA_2$ of the branches of the dyadic tree $2^{<\omega}$. Marciszewski \cite{Ma89} and Marciszewski and Pol \cite{MP09} showed that one can obtain many different spaces $K_\mathcal{A}$ by restricting to sufficiently different subsets $\mathcal{A}\subset \mathcal{A}_2$. But their results do not give any insight if we look at something so similar to $\mathcal{A}_2$ as the set $\mathcal{A}_n$ of the branches of the $n$-adic tree $n^{<\omega}$, for $n>2$. We prove that if $n>m$, then  $K_{\cA_n}$ is not homeomorphic to (not even a continuous image of) $K_{\cA_m}$, but nevertheless $\JL_p(\cA_n)$ and $\JL_p(\cA_m)$ are isomorphic Banach spaces. 

Marciszewski and Pol identified a certain property of Banach spaces that allows one to distinguish between some $\JL$-spaces. 
If $\mathcal B$ is the almost disjoint family associated with the full Baire tree $\omega^{<\omega}$ and $\mathcal A$ arises from a finitely branching tree, then the Banach spaces $\JL(\mathcal B)$ and $\JL(\mathcal A)$ are not isomorphic; see \cite[Corollary~3.3 and Theorem~3.4]{MP09}. 
As a consequence, $\JL_2(\mathcal B)$ and $\JL_2(\mathcal A)$ are also not isomorphic; see \cite[Theorem~6.1]{MP09}.
We were able to distinguish certain finitely branching trees from this perspective:
Our result states  that if $\mathcal B$ is the family of branches of a finitely branching tree~$\Upsilon$ in which the number of immediate successors of each node increases with its height, then $\JL_p(\mathcal B)$ is not isomorphic to $\JL_p(\mathcal A_2)$ for  any $p>1$.

Different phenomena occur when looking at almost disjoint families of size less than $\mathfrak{c}$. On the one hand, there is a variety of them. For example, Barriga-Acosta and Hern\'{a}ndez-Hern\'{a}ndez \cite{BARRIGAACOSTA20191} proved that for a cardinal number of uncountable cofinality, there are $\mathfrak{c}$ almost disjoint families of size $\kappa$, all of which are homeomorphic as subsets of $2^\omega$, yet whose associated compact spaces $K_\cA$ are pairwise non-homeomorphic. However, at the Banach space level uniformity appears. Koszmider and Rojek~\cite{KR25} proved that, under Martin’s axiom, for every pair of almost disjoint families of the same cardinality less than~$\mathfrak c$, there exists an automorphism of $\ell_\infty/c_0$ mapping one family onto the other. And it was shown in \cite{CABELLOSANCHEZ2020108571} that if Martin's axiom $\MA(\kappa)$ holds for a cardinal $\kappa$, then all spaces $\JL_\infty(\cA)$ with $|\mathcal{A}|=\kappa$ are isomorphic. The proof of this latter result was built on  a highly nontrivial result on twisted sums of $c_0$ and $C(K)$-spaces.
We give a short elementary proof that, for any $p\in (1,+\infty]$, all spaces $\JL_p(\mathcal{A})$ are isomorphic for families $\mathcal{A}$ of size $\kappa$ under $\MA(\kappa)$.

The paper is organized as follows. Section~\ref{2} deals with the topological classification of spaces $K_\cA$ when $\cA$ is the family of branches of a tree. Sections~\ref{4} and  \ref{5}  contain several results on
 the isomorphic classification of the corresponding spaces$\JL_p(\cA)$. 
Section~\ref{6} deals with almost disjoint families of size less than $\mathfrak{c}$. Besides the facts mentioned in this introduction, other finer but more technical results are included along the text. Finally, Section~\ref{sectionproblems} lists some open problems.

The main ideas used in this article were found by the authors. We used AI (Chatgpt 5.5) to revise and improve earlier versions of the work.

\section{Almost disjoint  families and compact spaces}\label{2}

We say that $\mathcal A$ is an almost disjoint family (AD family) if $\cA$ is a family of infinite subsets of some countable set
and $A\cap B$ is finite for any pair of distinct $A,B\in\cA$. Given an AD family $\cA$ on a countable set $N$, we define the $\psi$-space associated with it as the set 
$\psi(\mathcal A)=N \cup \mathcal A$ where
 $N$ is a discrete subspace, and  for  every $A\in \mathcal A$ the sets 
 \[ (A\setminus F) \cup \{A\},  \mbox{ where } F\in [N]^{<\omega},\]
 form a local base at the point $A$ of the space $\psi(\cA)$.
 It is easy to check that $\psi(\cA)$ is  
 a locally compact, separable, zero-dimensional Hausdorff space.

We  denote by  $K_{\mathcal A}$ the Alexandrov-Urysohn compactum associated  with $\mathcal A$, that is,  the one point compactification of $\psi(\mathcal A)$. Such a space is separable and scattered of height 3.
Note  that $K_\cA$ may also be  described as follows. Let $\fA$ be the algebra of subsets of $N$ generated by
$\cA$ and all finite sets. Then $K_\cA$ is (homeomorphic to) the Stone space of ultrafilters on the algebra $\fA$.

A tree $T$ is a partially ordered set with a minimum element such that every initial segment $\{s\in T : s\prec t\}$ is well ordered. A subtree of $T$ is a downwards closed subset $S\subseteq T$. We will focus on subtrees $T$ of the full tree $\omega^{<\omega}$, the set of finite sequences of natural numbers, ordered by end-extension.
The symbol $[T]$ denotes the set of all $x\in\omega^\omega$
for which every initial segment $x|n$ belongs to $T$. We write $\tau^\frown \sigma$ for the concatenation of $\tau$ and $\sigma$.
For $\tau\in T$ and $x\in \omega^\omega$ we write $\tau\prec x$ if $\tau$ is an initial segment of $x$, and 
\[ [\tau]=\{x\in [T]: \tau \prec x\}.\]
Recall that the family of all $[\tau]$ for $\tau\in T$  is  a base for the topology on $[T]$ inherited from 
the Baire space $\omega^\omega$.

Given a tree $T$ and $x\in [T]$,  we write $B(x)=\{x|n:n\in\omega\}$ for
the  branch corresponding to $x$. Note that $x$ also might be called  a branch.
However, in our setting the distinction between $x$ and $B(x)$ will
be convenient.
The sets $B(x)$ form an almost disjoint family on $T$ --- we write $K_T$
for the compact space associated to such a family. Thus
$K_T$ has all the nodes of $T$ as a set of isolated points and all $x\in [T]$ at the first level.
Note that the sets   $\{x\}\cup B(x)\sm F$ where $F$ is finite form a local base at $x\in K_T$.

In fact, for any subset $X$ of the Baire space $\omega^\omega$ we can  consider the almost disjoint family
\[ \cA_X=\{B(x): x\in X\}.\]
Note that, formally speaking, $\cA_X$ may be treated as an AD family on $\omega^{<\omega}$ or on its subset
$\{x|n: x\in X, n\in\omega\}$. This distinction, however, is inessential from our point of view.

\begin{remark}\label{rem1}
Suppose that $\cA$ is an AD family on $N$ and $N'\supseteq N$ is a larger  countable set.
If $\cA$ is not maximal then the $\psi$-spaces with base sets $N\cup\cA$ and $N'\cup\cA$ are homeomorphic.

Indeed, there is an infinite set $B\sub N$ which is almost disjoint from every $A\in\cA$. If $N'\setminus N$ is finite, any bijection will do. 
Suppose now that $N'\sm N$ is infinite; 
divide $B$ into two infinite parts $B_0, B_1$, and further divide $B_1$ into two infinite parts $B_{1,0},B_{1,1}$.
Then one can define the desired homeomorphism by
mapping  $N'\sm N$ injectively onto $B_0$, $B_0$ onto $B_{1,0}$, and $B_1$ onto $B_{1,1}$. 
\end{remark}

Marciszewski and Pol \cite[section 7]{MP12}  proved the following result.

\begin{theorem} \label{2:2}
There is a family $\cF$ of size continuum of closed subsets of the Cantor set such that  
whenever $F\neq H$ are in $\cF$  then the spaces $K_{\cA_{F}}$ and $K_{\cA_{H}}$ are not homeomorphic.
\end{theorem}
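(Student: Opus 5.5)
Given closed $F\sub 2^\omega$, consider $K_{\cA_F}$. I aim to recover from it a topological invariant of $F$ that is rich enough to separate continuum many closed sets. The natural idea is to use the set $K^{(1)}$ of nonisolated points. Here $K^{(1)}=\cA_F\cup\{\infty\}$ is the one-point compactification of a discrete set, so it carries no structure. The structure has to come from how the isolated nodes accumulate. For $x,y\in F$, the branches $B(x),B(y)$ share the initial segment $x|n$ with $n=\Delta(x,y)$. Thus the metric $d(x,y)=2^{-\Delta(x,y)}$ is encoded in the sizes of intersections of neighbourhoods. This is not a homeomorphism invariant as such, because a homeomorphism can move finitely many points of each branch arbitrarily.

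The better plan is to use a coarser but robust invariant, following Marciszewski--Pol. A homeomorphism $h\colon K_{\cA_F}\to K_{\cA_H}$ maps isolated points to isolated points, so it induces a bijection $\varphi\colon T_F\to T_H$ between node sets, together with a bijection $F\to H$. This bijection satisfies $\varphi[B(x)]=^* B(h(x))$ for every $x$. Moreover, every infinite $S\sub T_F$ that is almost disjoint from all branches, i.e.\ avoids neighbourhoods of every $x$ and so converges to $\infty$, is mapped to a set with the same property.

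The key combinatorial step is a lemma. \emph{If $\varphi$ almost-preserves branches, then on a comeager (or perfect) part of $F$ the induced bijection $F\to H$ is continuous.} To prove it, I would use a Baire category argument. For each $x$, let $n(x)$ be the least $n$ with $\varphi[B(x)\sm B(x)|n]\sub B(h(x))$. The sets $\{x:n(x)\le n\}$ cover $F$, and on each of them $h$ is determined by finitely many values of $\varphi$ along the branch, so $h$ is continuous there. Baire then gives a clopen piece $[\tau]\cap F$ on which $h$ is a homeomorphism onto a relatively open piece of $H$. Hence $K_{\cA_F}\cong K_{\cA_H}$ implies that some nonempty open portion of $F$ is homeomorphic to an open portion of $H$. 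The same applies locally, to every portion.

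It remains to construct $\cF$. I would take $2^\omega$ as the index set and build compact countable (scattered) subsets of the Cantor set with pairwise non-homeomorphic portions. For example, let $F_\alpha$ for $\alpha\in 2^\omega$ be a disjoint convergent union of clopen pieces, where piece $k$ is homeomorphic to $\omega^{\beta_k}+1$ with Cantor--Bendixson rank chosen according to $\alpha(k)$. This choice makes the multiset of local ranks at a dense set of points encode $\alpha$ up to finite change. To obtain exactly continuum many pairwise distinct classes, I would refine the indices to an almost disjoint family of subsets of $\omega$ of size $\mathfrak c$, encoded via the ranks, so that no portions of $F_\alpha$ and $F_\beta$ match.

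The main obstacle is the Baire category lemma. $F$ may be countable, in which case Baire category in $F$ still works (a compact countable space is Baire), but the "portion" conclusion is weak because of isolated points. I would therefore apply the lemma to the scattered structure: the homeomorphism must preserve the Cantor--Bendixson ranks of points in a comeager set. I expect this rank preservation to be the genuinely delicate point.
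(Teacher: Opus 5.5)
\textbf{There is a genuine gap: the construction fails outright, and the invariant behind it is too coarse to be repaired.}

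Every $F_\alpha$ you propose is countable. So $K_{\cA_{F_\alpha}}$ is a countable compact (hence metrizable) scattered space with $K'=F_\alpha\cup\{\infty\}$ and $K''=\{\infty\}$. By the Mazurkiewicz--Sierpi\'nski classification, every such space is homeomorphic to $\omega^2+1$. Hence all members of your family give the \emph{same} compactum, whatever ranks $\beta_k$ you choose. Rank preservation on a comeager set cannot rescue this, since in a countable compact space a comeager set only has to contain the isolated points.

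More generally, no invariant read off from the \emph{topology} of portions of $F$ can work. A nonempty closed subset of $2^\omega$ either has an isolated point or is perfect, and in the latter case it and all its nonempty portions are Cantor sets. So ``some portion of $F$ is homeomorphic to a portion of $H$'' distinguishes only a handful of cases, never $\mathfrak c$ many. Separately, your claim that $h$ maps $[\tau]\cap F$ onto a \emph{relatively open} subset of $H$ is not justified: continuity on a compact portion only gives an embedding.

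\textbf{The missing idea.} Your Baire category argument yields quantitative information, not just continuity, and that is what survives. Put $X_n=\{x\in F: \varphi(x|j)\prec h(x) \mbox{ for all } j\ge n\}$. For $x\in X_n$, the $m+1$ distinct nodes $\varphi(x|n),\ldots,\varphi(x|n+m)$ all lie on $B(h(x))$, so one of them has length at least $m$. Hence $h(x)|m$ is determined by $x|(n+m)$: on $X_n$ the map $h$ is Lipschitz up to a shift of $n$ levels. Such maps preserve the \emph{branching rate} of $T_F$, which the topology of $F$ cannot see. This is exactly the counting in the proof of Theorem~\ref{2:5}; the paper quotes Theorem~\ref{2:2} from Marciszewski and Pol, and Theorem~\ref{2:5} is the mechanism it isolates.

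\textbf{How to repair it.} Use perfect sets with different branching rates. For example, let $F_A=\{x\in 2^\omega: x(i)=0 \mbox{ for all } i\notin A\}$, so that $|\lev_n(T_{F_A},\tau)|=2^{|A\cap[|\tau|,n)|}$. If $\limsup_n(|A\cap n|-|B\cap n|)=\infty$, then for any $\tau$ and $k$ there is $n$ with $|A\cap[|\tau|,n)|>|B\cap(n+k)|$. Theorem~\ref{2:5} then gives that $K_{\cA_{F_A}}$ and $K_{\cA_{F_B}}$ are not homeomorphic. To get $\mathfrak c$ many such sets, take consecutive intervals $I_k$ with $|I_k|>k+\sum_{j<k}|I_j|$, and set $A_\alpha=\bigcup_{k\in\alpha}I_k$ for $\alpha$ ranging over an almost disjoint family of size $\mathfrak c$ on $\omega$. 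Your instinct to index by an almost disjoint family is the right device, but it must code branching rates, not Cantor--Bendixson ranks.
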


The core of the argument used to prove \ref{2:5} and \ref{2:6} can be informally stated as follows: If subtrees 
$T,S$ of $\omega^{<\omega}$ are substantially different then the compacta $K_{T}$ and $K_{S}$
are not homeomorphic. This observation was first made  in the unpublished dissertation of Poprawa \cite{Po22},
building on ideas from \cite{MP12}. We expand it further in this section, 
presenting two criteria expressing that trees differ to such an extent
that they give rise to incomparable compact spaces.

Below we denote by $|\tau|$ the length of $\tau\in T$, and
\[\lev_n(T)=\{\tau\in T: |\tau|=n\};\]
\[ \lev_n(T,\tau)=\{t\in\lev_n(T): \tau\preceq t\}; \]
\[ \suc_T(\tau)=\{\tau\sfrown n\in T: n\in\omega\}.\]

\begin{theorem}\label{2:5}
Let $T$ and $S$ be   finitely branching subtrees of $ \omega^{<\omega}$. Suppose that 
for every $\tau \in T$ and $k\in\omega$ there is $n\in\omega$ such that 
\[ \abs{\lev_n(T,\tau) } > \abs{\lev_{k+n}(S)}.\]
Then the compact spaces $K_T$ and $K_{S}$ are not homeomorphic.
\end{theorem}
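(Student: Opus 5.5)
The plan is to argue by contradiction. Suppose $h\colon K_T\to K_S$ is a homeomorphism. Since $h$ preserves Cantor--Bendixson levels, it maps nodes of $T$ bijectively onto nodes of $S$, $[T]$ onto $[S]$, and $\infty$ to $\infty$. Fix $x\in[T]$. The set $\{h(x)\}\cup B(h(x))$ is a basic neighbourhood of $h(x)$, so its preimage is a neighbourhood of $x$ and hence contains almost all of $B(x)$. Arguing symmetrically with $h^{-1}$, $h$ restricts to a bijection between a cofinite part of $B(x)$ and a cofinite part of $B(h(x))$. The aim is to make this uniform on a basic clopen set $[\tau]$ of $[T]$ with a fixed ``shift'' $k$, and then to count nodes at levels $n$ and $n+k$.

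\emph{Step 1 (uniformization by Baire).} For $m,k\in\omega$ let $E_{m,k}$ be the set of $x\in[T]$ such that
\[ h(x|i)\in B(h(x)) \ \text{for all } i\ge m, \quad\text{and}\quad h^{-1}(h(x)|j)\in\{x|i: i\ge n\}\ \text{whenever } j\ge n+k,\ n\ge m. \]
The second clause says that a node of $h(x)$ of height at least $n+k$ comes from a node of $x$ of height at least $n$. This holds for some $m,k$ because only finitely many nodes of $B(x)$ fail to correspond, and one takes $k$ larger than the heights of the images of the finitely many exceptional and initial nodes. The sets $E_{m,k}$ cover $[T]$, and $[T]$ is compact since $T$ is finitely branching. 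I would check that each $E_{m,k}$ is closed, or at least work with closures. The point is that every condition involves only finitely many nodes at a time, and $x\mapsto h(x)$ is continuous on $[T]$ as a map into $[S]$ with the tree topology, because $h(x)|j$ is determined by $h$ on a tail of $B(x)$. The Baire category theorem then gives $\tau\in T$ and $m,k$ with $E_{m,k}$ dense in, hence equal to, $[\tau]$.

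\emph{Step 2 (counting).} Apply the hypothesis to this $\tau$ and $k$ to get $n$ (which we may take $\ge m, |\tau|$) with $|\lev_n(T,\tau)|>|\lev_{n+k}(S)|$. For each $t\in\lev_n(T,\tau)$ choose $x_t\in[t]\subseteq E_{m,k}$ and put $\vf(t)=h(x_t)|(n+k)\in\lev_{n+k}(S)$. Suppose $\vf(t)=\vf(t')=s$ with $t\neq t'$. Then the second clause gives $h^{-1}(s)=x_t|i=x_{t'}|i'$ with $i,i'\ge n$, so $x_t$ and $x_{t'}$ share a node of height at least $n$. This forces $t=t'$, a contradiction. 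Thus $\vf$ is injective, contradicting the choice of $n$.

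The main obstacle is Step 1: formulating the uniform condition so that it really holds for every branch with some $(m,k)$, so that it is closed (or Borel with the Baire property), and so that it yields exactly the height bound ``$h^{-1}$ of a node at height $\ge n+k$ lies at height $\ge n$''. The reason is that $h$ need not respect the order along a branch. If the order-free formulation causes trouble, my fallback is to replace it by a counting version: among the nodes of $h(x)$ of heights at most $n+k$, all but boundedly many come from nodes of $x$ of height less than $n$. Step 2 would then be carried out as an injection into $\lev_{n+k}(S)$ modulo a bounded error, absorbed by enlarging $k$.
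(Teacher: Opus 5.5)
\paragraph{Gap in Step 1.} Your outline matches the paper's: a Baire category argument freezes a threshold on some $[\tau]$, and then $t\mapsto h(x_t)|(n+k)$ maps $\lev_n(T,\tau)$ into $\lev_{n+k}(S)$. However, Step 1 depends on a claim that is false. The second clause of $E_{m,k}$ demands a \emph{bounded height shift}: $h^{-1}(h(x)|j)$ must have height at least $j-k$ for all large $j$. Knowing that $h$ matches a cofinite part of $B(x)$ with a cofinite part of $B(h(x))$ gives no such bound, because the matching can permute heights. For example, take $T=S=2^{<\omega}$. Let $h$ be the identity on $[T]\cup\{\infty\}$, and on nodes swap $0^{i^2}$ with $0^{i^2+i}$ for every $i\ge 1$, where $0^r$ is the zero sequence of length $r$. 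Each $B(z)$ is mapped onto itself up to finitely many nodes, so $h$ is a homeomorphism of $K_T$. But for $x=0^\infty$ we get $h^{-1}(x|(i^2+i))=x|i^2$, so $x\notin E_{m,k}$ for every $m,k$. Hence the $E_{m,k}$ need not cover $[T]$, and the Baire step is unsupported. Your fallback does not repair this. What makes $\vf$ injective is that \emph{at least one} of the relevant nodes comes from height $\ge n$, not that most come from height $<n$, and no bounded-error bookkeeping is needed.

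\paragraph{The fix.} The repair is what the paper does: ask only for what continuity gives. Let $E_k$ be the set of $x\in[T]$ with $h^{-1}(h(x)|j)\in B(x)$ for all $j\ge k$. Every $x$ lies in some $E_k$ by continuity of $h^{-1}$ at $h(x)$. Now suppose $\vf(t)=\vf(t')=s$. The $n+1$ nodes $s|j$, for $k\le j\le n+k$, have pairwise distinct images under $h^{-1}$, all lying in $B(x_t)\cap B(x_{t'})$. Since $B(x_t)$ has only $n$ nodes of height $<n$, one of these images has height $\ge n$. It is a common initial segment of $x_t$ and $x_{t'}$, so $t=t'$. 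No parameter $m$ is needed.

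\paragraph{Closedness is unnecessary.} You also do not need closedness of the $E_k$, and your argument for it fails. The map $x\mapsto h(x)$ need not be continuous for the tree topologies. For instance, swapping $0^i1^j$ with $10^i1^j$ for $i,j\ge1$ gives a homeomorphism with $h(0^i1^\infty)=10^i1^\infty$, while $h(0^\infty)=0^\infty$. The Baire category theorem applied to the countable cover $[T]=\bigcup_k E_k$ already gives some $E_k$ dense in some $[\tau]$. Density is enough to choose $x_t\in[t]\cap E_k$.
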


\begin{proof}
 We argue by contradiction: suppose that $f:K_{S}\to K_T$ is a homeomorphism. The points of $T$ are isolated in the space  $K_T$ and $\infty$ is the only non-isolated point of $K_T$ that does not have a neighborhood that is a convergent sequence. The same applies for $S$, so $f(S)=T$ and $f(\infty)=\infty$.
   
For every branch $x\in [T]$ there is a natural number $k_x$ such that if we take $y\in [S]$ such that
$f(y)=x$ then $f(\tau)\prec x$ for every $\tau\prec y$ of length $\ge k_x$. Hence we can decompose $[T]$
into countably many pieces
\[ [T]=\bigcup_k X_k;\quad \mbox{ where } X_k=\{x\in [T]:  k_x= k\}.\]

By the Baire category theorem, there is $k$ and $\tau\in T$ such that $X_k\cap [\tau]$ is
dense in $[\tau]$. We now consider $n$ whose  existence is given by the assumption of the theorem.
For every $t \in \lev_{n}(T,\tau)$ pick $x_t\in X_k$ such that $t\prec x_t$, and   $y_t\in [S]$
satisfying
$f(y_t)=x_t$. Define 
\[ \vf:\lev_n(T,\tau)\to \lev_{k+n}(S) \mbox{ by } \vf(t)=y_t|(k+n).\]
 To obtain a contradiction, we show that   $\vf$ is injective.

Take $t,t'\in \lev_n(T,\tau)$ for which  $\vf(t)=\vf(t')$. Consider $\sigma_j=\vf(t)|j$ for $j=k, \ldots, k+n$. We have $n+1$ elements,
 we know that $f(\sigma_j)\prec x_t$ and $f(\sigma_j)$'s  are pairwise different. Hence  there is $j$ such that
the length of $f(\sigma_j)$ is at least $n$. At the same time $f(\sigma_j)\prec x_{t'}$ so 
\[ t'=x_{t'}|n= f(\sigma_j)|n = x_t|n=t,\]
and the proof is complete.
\end{proof}

The next result relies on  a general observation concerning continuous functions on scattered
spaces. Here $K'$ and $K''$ denote the first and the second Cantor-Bendixson derivative of a space $K$.

\begin{lemma}\label{2:6l}
Let $K$ and $L$ be scattered compact spaces such that $K''=\{a\}$ and $L''=\{b\}$.
If $g:K\to L $ is a continuous surjection then for every $x\in L'\sm\{b\} $ the set
$g^{-1}(x)\cap K'$ is nonempty and finite.
\end{lemma}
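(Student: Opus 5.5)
Fix $x\in L'\sm\{b\}$. The plan has two parts: first show that $g^{-1}(x)$ meets $K'$, then show that this intersection is finite.

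\emph{Nonemptiness.} Since $g$ is surjective and $K$ is compact, the fiber $g^{-1}(x)$ is a nonempty closed subset of $K$. Suppose, for a contradiction, that $g^{-1}(x)\cap K'=\emptyset$. Then $g^{-1}(x)$ is a compact set consisting of isolated points of $K$, so it is finite and open. Now use that $x\in L'$ is not isolated: there is a net (or sequence, if first countability is available) $x_\alpha\to x$ with $x_\alpha\neq x$. Pick $z_\alpha\in K$ with $g(z_\alpha)=x_\alpha$. By compactness, $(z_\alpha)$ has a cluster point $z$. Continuity gives $g(z)=x$, so $z$ lies in the finite open set $g^{-1}(x)$. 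Hence $z_\alpha\in g^{-1}(x)$ for some (indeed cofinally many) $\alpha$, which forces $x_\alpha=x$, a contradiction. Therefore $g^{-1}(x)\cap K'\neq\emptyset$.

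\emph{Finiteness.} First, $a\notin g^{-1}(x)$. Indeed, $a$ is the unique point of $K''$, so it is a limit of points of $K'$. I would like to show that $g(a)=b$. Here is the argument. $L\sm\{b\}$ is locally compact, and each point of $L'\sm\{b\}$ has a neighborhood $U$ with $U\cap L'$ finite; this holds since $L''=\{b\}$. If we had $g(a)=y\neq b$, pick a clopen neighborhood $U$ of $y$ with $U\cap L'=\{y\}$ (possible since $L$ is zero-dimensional, being scattered compact) or $U$ finite. Then $g^{-1}(U)$ is a clopen neighborhood of $a$, so it contains infinitely many points of $K'$. These all map into $U$. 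That is not yet a contradiction, so instead I argue via surjectivity and counting. Since $L'$ is infinite (as $b\in L''$), and $K\sm V$ for a clopen $V\ni a$ contains only finitely many points of $K'$ (because $K'\sm V$ is compact and discrete), the infinitely many points of $L'$ outside $U$ would need preimages in $K'$, by the nonemptiness part. Those preimages lie outside $g^{-1}(U)$, i.e.\ in the finite set $K'\sm g^{-1}(U)$, which is a contradiction. Hence $g(a)=b$.

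Now $g^{-1}(x)$ is a closed set not containing $a$. So $g^{-1}(x)\cap K'$ is a compact subset of $K'\sm\{a\}$, in which every point is isolated relative to $K'$. A compact discrete set is finite. The main subtlety is exactly this step of showing $g(a)=b$, and the counting argument above handles it.
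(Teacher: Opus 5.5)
Your proof is correct and has the same three-step structure as the paper's: fibres over points of $L'$ meet $K'$; this gives $g(a)=b$ by a counting argument; and $g^{-1}(x)\cap K'$ is finite because it is a compact discrete subset of $K'\setminus\{a\}$. Your counting step is the complementary form of the paper's (you take a neighborhood of $g(a)$ meeting $L'$ in at most one point and use that $K'\setminus g^{-1}(U)$ is finite, where the paper takes a clopen neighborhood of $b$ missing $g(a)$), and your nonemptiness step uses a net/closed-map argument on the fibre instead of a clopen $V\ni x$ disjoint from $g[K']$, which works equally well and does not need zero-dimensionality.
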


\begin{proof}
Recall first that $L'\sub g[K']$. Indeed,  suppose that $x\in L'\sm g[K']$ and
take a clopen set $V\ni x$ which is disjoint from $g[K']$. Then $V$ is  infinite while $g^{-1}[V]\sub K\sm K'$ is a compact set of isolated points,
which is impossible.

By the same argument $g[K'']$ contains $L''$, that is $g(a)=b$. Therefore, 
$g^{-1}(x)\cap K'\neq\emptyset$ for every $b\neq x\in L'$ and
this set  must be finite as a closed subset of the discrete set $K'\sm\{a\}$.
\end{proof}

\begin{theorem}\label{2:6}
Let $T,S\subseteq \omega^{<\omega}$ be finitely branching trees and suppose that 
for every $\tau \in T$ 
there exists $t\in T$ with $\tau \preceq t$ such that 
\[ \abs{\suc_T(t)} > \abs{\suc_S(\sigma)} \mbox{ for all } \sigma\in S.\]
Then  there is no continuous surjection $K_S\to K_{T}$.
\end{theorem}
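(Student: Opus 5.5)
We argue by contradiction, following the Baire category strategy of Theorem~\ref{2:5}, with Lemma~\ref{2:6l} supplying the finiteness that replaces injectivity of a homeomorphism. Suppose $g:K_S\to K_T$ is a continuous surjection. Since $K_S''=\{\infty\}=K_T''$, Lemma~\ref{2:6l} gives $g(\infty)=\infty$, and for every $x\in[T]$ the set $g^{-1}(x)\cap[S]=\{y_1,\dots,y_m\}$ is finite and nonempty.

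\textbf{Step 1: a uniform constant $k_x$.} Fix $x\in[T]$ with preimage branches $y_1,\dots,y_m$. We will choose $k_x$ with the following two properties.
\begin{enumerate}[(a)]
\item For every $i\le m$ and every $\sigma\prec y_i$ with $|\sigma|\ge k_x$ we have $g(\sigma)\in B(x)\cup\{x\}$. This holds for large $k_x$ by continuity at $y_i$, using the basic neighbourhood $\{x\}\cup B(x)$.
\item For every $u\prec x$ with $|u|\ge k_x$, the set $g^{-1}(u)$ consists of nodes lying in $\bigcup_i\{y_i|j: j\ge k_x\}$.
\end{enumerate}
For (b), suppose instead that there are infinitely many $u_j\prec x$ with some preimage point $z_j$ outside these tails. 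If $z_j$ is a branch, replace it by a node far along $B(z_j)$, still mapped to $u_j$, still outside the tails, and not among the finitely many small nodes. Any accumulation point $z$ of the $z_j$ satisfies $g(z)=x$, because $u_j\to x$. Hence $z\ne\infty$ and $z=y_i$ for some $i$, which forces $z_j$ eventually into the tail of $B(y_i)$, a contradiction. The same argument shows $g^{-1}(u)$ contains no branch: a branch $z$ with $g(z)=u$ would have its tail inside some $B(y_i)$, so $z=y_i$, contradicting $g(y_i)=x\ne u$.

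\textbf{Step 2: Baire category.} Let $X_k=\{x\in[T]:k_x\le k\}$. Since $[T]=\bigcup_k X_k$, there are $k$ and $\tau\in T$ such that $X_k$ is dense in $[\tau]$. Extend $\tau$ to length $\ge k$ and apply the hypothesis. This gives $t\succeq\tau$ with $|t|\ge k$ and $|\suc_T(t)|>|\suc_S(\sigma)|$ for all $\sigma\in S$. For each $s\in\suc_T(t)$ pick $x_s\in X_k$ with $s\prec x_s$.

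\textbf{Step 3: the injection (main obstacle).} Let
\[
D=\bigcup\{g^{-1}(u): u\preceq t,\ |u|\ge k\}.
\]
By (b), applied to any $x_s$, $D$ is a finite nonempty set of nodes. It is nonempty because $g^{-1}(t)$ is a nonempty open set, hence contains nodes. Choose $\sigma\in D$ of maximal length. By (b), for each $s$ there is a preimage branch $y_s$ of $x_s$ with $\sigma\prec y_s$ and $|\sigma|\ge k$. Put $\sigma_s=y_s|(|\sigma|+1)\in\suc_S(\sigma)$.

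By (a), $g(\sigma_s)\in B(x_s)\cup\{x_s\}$. By maximality of $\sigma$ we have $\sigma_s\notin D$, so $g(\sigma_s)$ is not a node $\preceq t$ of length $\ge k$. Since $|\sigma_s|\ge k$ and property (a) places $g(\sigma_s)$ on $x_s$, and the nodes of $B(x_s)$ of length $<k$ must still be excluded, the remaining possibilities are $g(\sigma_s)\succeq s$ or $g(\sigma_s)=x_s$. I expect this to be the delicate point: one may need to enlarge $k_x$ in (a) so that images of the tail avoid short nodes as well, which is legitimate by continuity since finitely many short nodes are excluded. Once this is arranged, different $s$ give different images, so $s\mapsto\sigma_s$ is an injection $\suc_T(t)\to\suc_S(\sigma)$. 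This contradicts the choice of $t$.
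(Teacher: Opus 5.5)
There is a genuine gap, at exactly the point you flagged, and it already starts in Step~1(b).

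\textbf{Step~1(b) conflates two thresholds.} Your compactness argument proves only this: for each tail height $k$ there is some length $L(k)$ such that every node $u\prec x$ with $|u|\ge L(k)$ has all its preimages among the nodes $y_i|j$ with $j\ge k$. It does not give $L(k)\le k$. Enlarging $k_x$ to absorb the finitely many bad $u$ shortens the tails and creates new bad $u$. For instance, suppose one preimage branch of $x$ is mapped onto the even-length nodes of $x$ and another onto the odd-length ones, as for the natural surjection when $S$ is assembled from the even and odd levels of $T$. Then $x|n$ is hit only at height about $n/2$, so $L(k)\approx 2k$.

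\textbf{Step~3 does not give an injection.} The injectivity of $s\mapsto\sigma_s$ needs $g(\sigma_s)$ to lie strictly above $t$. But (a) only gives $g(\sigma_s)\in\{x_s\}\cup B(x_s)$, and maximality of $\sigma$ only excludes nodes $u\preceq t$ with $|u|\ge k$. If $g(\sigma_s)$ is a short node below $t$, then $\sigma_s=\sigma_{s'}$ for some $s\ne s'$ contradicts nothing you have established. Your remedy is to choose $k_x$ so that nodes of height $\ge k_x$ on the $y_i$ go to nodes of length $\ge k_x$. That is a fixed-point condition, not a consequence of continuity. If every edge of $T$ is subdivided once to form $S$, the natural map sends $y|j$ to $x|\lceil j/2\rceil$, and the condition fails for every $k\ge 2$. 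Keeping two separate thresholds does not help either. You would then need the images of the tails beyond $k$ to be longer than the images of all earlier nodes of the $y_i$ landing on $x$, and nothing forces this for any $k$ large enough for (a). Length bookkeeping cannot close the argument.

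\textbf{The missing idea.} You need to control \emph{membership} rather than lengths, localized at a node. Since $g^{-1}(\{x\}\cup B(x))$ is clopen and misses $\infty$, it equals $\bigcup_{w\in W}(\{w\}\cup B(w))$ up to finitely many nodes, for a finite set $W$ of branches. Note that $W$ may contain branches sent to nodes of $x$. Choose $\sigma_x\prec y_x$ on one preimage branch, beyond all splittings inside $W$ and all exceptional nodes. Then $y_x$ is the only preimage of $x$ in $[\sigma_x]$, and for every node $\rho\succeq\sigma_x$ we have $g(\rho)\in\{x\}\cup B(x)$ if and only if $\rho\prec y_x$.

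The paper's proof runs as follows:
\begin{enumerate}
\item Freeze $\sigma_x=\sigma$ by Baire category, instead of freezing $k_x$.
\item Take $x_s\in[s]$ for $s\in\suc_T(t)$ with $\sigma_{x_s}=\sigma$, so all the $y_s$ pass through $\sigma$.
\item Apply pigeonhole at the first splitting node of the $y_s$ above $\sigma$. Some successor $\rho$ lies on $y_s$ and $y_{s'}$ but not on some $y_{s''}$.
\item Then $g(\rho)$ is a common initial segment of $x_s$ and $x_{s'}$, so $g(\rho)\preceq t\prec x_{s''}$. This contradicts the equivalence for $x_{s''}$.
\end{enumerate}
This uses three branches instead of an injection, and needs no information about the lengths of images.
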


\begin{proof}
 Suppose, towards a contradiction, that  $f:K_{S}\to K_T$ is a continuous, onto map.
By Lemma \ref{2:6l}, $f(\infty)=\infty$ and the set $\{y\in [S]: f(y)=x\}$ is finite and nonempty for every $x\in [T]$. This means that $f^{-1}(x\cup \{s: s\prec x\})$ is a clopen subset of $K_S$ that has a finite nonzero number of elements in the middle level formed by the branches. Such a clopen set has to be the union of finitely many branches modulo finitely many isolated points.
Hence, to every $x\in [T]$ we can associate $\sigma_x\in S$
so that

\begin{enumerate}[(i)]
\item there is exactly one $y_x\in [\sigma_x]$ such that $f(y_x)=x$;
\item for every $\sigma\in S$ with $\sigma_x\preceq\sigma$  we have $f(\sigma)\preceq x$ if and only if  $\sigma\preceq y_x$.
\end{enumerate}


In this way,  to every $x\in [T]$ we have assigned a parameter from a countable set and the Baire category argument implies that
there is $\tau\in T$ such that the set of those $x\in[\tau]$ for which $\sigma_x$ is equal to a fixed $\sigma$  
is  a  dense subset of  $[\tau]$.

Take $t\in T$ such that $\tau\preceq t$, 
 $\suc_T(t)=\{t_0,\ldots, t_{m-1}\}$
and $m$  is larger than the cardinality of successors in $S$.

It follows that we have $x_0,\ldots, x_{m-1}\in [T]$ such that $x_i\in [t_i]$ for $i<m$, 
all sharing the same $\sigma$. Then for every
$i<m$ there is the unique $y_i\in[\sigma]$ with $f(y_i)=x_i$.

Now the point is that every pair $y_i, y_j$ splits somewhere above  but they all cannot split at the same level.
Hence, there is $\sigma_1$ with $\sigma\preceq\sigma_1$ and there are $i,j,k<m$ 
such that $y_i,y_j\in[\sigma_1]$ while $y_k\notin [\sigma_1]$. 
Then, by $(ii)$ above,
$f(\sigma_1)\preceq x_i$ and $f(\sigma_1)\preceq x_j$ and hence  $f(\sigma_1)\preceq t$.
On the other hand, 
$f(\sigma_1)\not\preceq x_k$, which is  impossible.
\end{proof}

\begin{corollary}\label{2:7}
For each $m\ge 2$, let  $K_{m}$ denote  the compact space associated to the branches of the $m$-adic tree
$m^{<\omega}$.
Then the spaces $K_m$  are pairwise non-homeomorphic. Furthermore, if $n<m$ then there is no continuous surjection from 
$K_{n}$ onto  $K_{m}$.
\end{corollary}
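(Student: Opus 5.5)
The plan is to derive both assertions directly from Theorem~\ref{2:6}, applied with $T=m^{<\omega}$ and $S=n^{<\omega}$ for $n<m$. Both trees are finitely branching subtrees of $\omega^{<\omega}$. In $m^{<\omega}$ every node has exactly $m$ immediate successors, and in $n^{<\omega}$ every node has exactly $n$. So for every $\tau\in T$ we may take $t=\tau$ itself: then $\abs{\suc_T(t)}=m>n=\abs{\suc_S(\sigma)}$ for every $\sigma\in S$. The hypothesis of Theorem~\ref{2:6} is therefore satisfied trivially, and there is no continuous surjection $K_n=K_S\to K_T=K_m$. This is the ``furthermore'' clause.

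For pairwise non-homeomorphism, take $n\neq m$ and assume without loss of generality that $n<m$. A homeomorphism $K_n\to K_m$ would in particular be a continuous surjection from $K_n$ onto $K_m$, which the previous paragraph rules out. Hence $K_n$ and $K_m$ are not homeomorphic.

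I expect no real obstacle here. The only point to check is that the objects match: $K_m$, the compactum of the branches of $m^{<\omega}$, is exactly the space $K_T$ of Section~\ref{2} for $T=m^{<\omega}$, where the AD family $\{B(x):x\in[T]\}$ lives on the node set $T$. This holds by definition. One could alternatively use Theorem~\ref{2:5}, since $\abs{\lev_N(T,\tau)}=m^N$ eventually exceeds $n^{k+N}$, but that only yields non-homeomorphism. Theorem~\ref{2:6} is the more efficient route because it gives both statements at once.
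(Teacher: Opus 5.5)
Your proof is correct and is exactly the paper's argument: the paper also deduces the corollary directly from Theorem~\ref{2:6} with $T=m^{<\omega}$ and $S=n^{<\omega}$ (one may take $t=\tau$), and non-homeomorphism follows because a homeomorphism is a continuous surjection. In your side remark on Theorem~\ref{2:5}, the count should be $\abs{\lev_N(T,\tau)}=m^{N-|\tau|}$ rather than $m^N$, which does not affect that remark's conclusion.
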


\begin{proof}
This follows directly from Theorem \ref{2:6}.
\end{proof}

Let us remark that Theorem \ref{2:5} could also be applied for $T=m^{<\omega}$  and $S=p^{<\omega}$ when $p<m$, but it gives a weaker conclusion. Indeed,  for every $\tau\in T$ of length $n_0$, we have that $|\lev_n(T,\tau)|=m^{n-n_0}$
while $ |\lev_{k+n}(S)|=p^{k+n}< m^{n-n_0}$ for large $n$.

\section{Johnson-Lindenstrauss spaces}\label{3}

For any compact space $K$ we denote by $C(K)$ the Banach space of continuous real-valued functions
on $K$, equipped with the usual supremum norm. As always, we identify the dual space
$C(K)^\ast$ with $M(K)$ --- the space of signed Radon measures on $K$ having finite variation.
We sometimes write $\mu(f)$ rather than $\int_K f\; {\rm d}\mu$.
Given $\mu\in M(K)$, the norm of $\mu$ is equal to $|\mu|(K)$, where $|\mu|$ is the variation of $\mu$.
Recall that if $K$ is scattered then every $\mu\in M(K)$ is purely atomic.
For any $x\in K$ we denote by $\delta_x$ the corresponding Dirac measure.

Let $\cA$ be an almost disjoint family on $\omega$. For any $p>1$ and $f\in C(K_\cA)$ we let
\[ \norm {f}'_p=\Big(\sum_{A\in \mathcal A}\abs{f(A)}^p\Big)^{1/p} \mbox{ and } \norm f_p=\max\{\norm {f}_\infty,\norm{f}'_p\}.\]

\begin{definition}\label{3:0}
   The $p$-Johnson-Lindenstrauss space associated with $\cA$ is the space
\[\JL_{p}(\mathcal A)=\{f\in C(K_\mathcal A): \norm{f}_p'<\infty\},\]
equipped with the norm $\norm \cdot_p$.
\end{definition}

Clearly, if $1<p<\infty$ and $f\in \JL_p(\cA)$ then necessarily $f(\infty)=0$. By analogy,  the space $C(K_\cA)$ may also be 
denoted by $\JL_\infty(\cA)$.
$\JL$-spaces  may be seen as twisted sums of some familiar Banach spaces.
Concerning the terminology that we use here, we refer the reader to a  recent monograph \cite{CSC}
by Cabello Sánchez and Castillo.

Recall that
an \emph{exact sequence}  of Banach spaces is a diagram
\[
\begin{CD} 0@>>> Y @>\jmath>> Z @>\rho>>
X@>>>0
\end{CD}\]
formed by Banach spaces and linear continuous operators in which the kernel of each arrow coincides with the image of the
preceding one. The middle space $Z$ is usually called a \emph{twisted sum} of $Y$
and $X$. By the open mapping theorem, $Y$ must be isomorphic to a subspace of $Z$ and $X$ to the quotient $Z/Y$.

If $\cA$ is an almost disjoint family and $|\cA|=\kappa$ then (for every $p> 1$)
\[
\begin{CD} 
0@>>> c_0 @>>> \JL_\infty(\cA) @>>>c_0(\kappa)@>>>0,\\
0@>>> c_0 @>>> \JL_p(\cA) @>>>\ell_p(\kappa)@>>>0, 
\end{CD}\]
see \cite[page 102]{CSC}.

Definition \ref{3:0} might be formally applied for $p=1$. This case, however, becomes trivial as it is not difficult to check that
\[ \JL_1(\cA)\simeq c_0\oplus \ell_1(|\cA|).\]

Let us record the following simple observations. For any AD families $\cA, \cA'$ and $p\in (1,\infty]$,
if $\JL_p(\cA)\simeq \JL_p(\cA')$ then $|\cA|=|\cA'|$. Moreover,
\[ \JL_p(\cA)\simeq c_0\oplus \JL_p(\cA).\]

Marciszewski and Pol \cite{MP09} proved that the Banach spaces 
$C(K_{2^{<\omega}})$ and $C(K_{\omega^{<\omega}})$ are not isomorphic.
Earlier, Marciszewski \cite{Ma89} introduced  an index (of Rosenthal compact spaces) that enables one to
distinguish Banach spaces of the form $C(K_{\cA_X})$ for Borel sets $X\sub\omega^\omega$.
These results are also outlined in \cite[section 6 and 7]{CABELLOSANCHEZ2020108571}.
The conclusion is that there is a family $\{X_\xi:\xi<\omega_1\}$ of Borel subsets of $\omega^\omega$ such that,
\[ C(K_{\cA_{X_\xi}})\not\simeq C(K_{\cA_{X_\eta}})\mbox{ whenever } \xi\neq\eta.\]
In the next section we present a couple of results on isomorphisms between various $\JL$-spaces. 
In the rest of this section we record some auxiliary results.

Given a pair of compact spaces $K\sub L$, by an \textit{extension operator} 
$E:C(K)\to C(L)$ we mean a bounded linear operator (between Banach spaces of continuous functions) 
such that $E(f|_K) = f$ for every $f\in C(K)$.
Recall that if such an operator exists then $C(K)$ is isomorphic to a complemented 
subspace $X=\{E(f): f\in C(K)\}$ of $C(L)$. Indeed, we have then a projection $P:C(L)\to X$
defined as $P(g)=E(g|_K)$, $g\in C(L)$.
Note that if there is a continuous retraction $r$ from $L$ onto $K$ then
$C(K)\ni f\mapsto f\circ r \in C(L)$ gives an extension operator of norm one.

We shall consider such a pair of compact spaces $K,L$ where $L$ is a countable discrete
extension of $K$, that is  $L=K\cup N$ and $N$ is a 
countable infinite set of isolated points (and $N\cap K=\emptyset$).
 In this setting we recall the following standard fact, see \cite[Lemma 2.7]{MP18}.
 
 \begin{lemma} \label{3:1}
Let $K$ be a zero-dimensional compact space and let $L=K\cup N$ be its countable discrete extension.

\begin{enumerate}[(a)]
\item There is a continuous retraction $r$ from $L$ onto $K$ if and only if there
is a mapping  $N\ni n\mapsto x_n\in K$ such that $\lim_n(\delta_{x_n}-\delta_n)(C)=0$
for every clopen set $C\sub L$.
\item
There is an extension operator $E:C(K)\to C(L)$
 if and only if there is a bounded sequence of signed measures $(\mu _n)_{n \in N}$ on $K$ such that
 $\lim_n(\mu_{n}-\delta_n)(C)=0$
for every clopen set $C\sub L$.
 \end{enumerate}  
\end{lemma}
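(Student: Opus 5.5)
For part (a), the forward direction is direct. Suppose $r:L\to K$ is a continuous retraction, and put $x_n=r(n)$. Fix a clopen $C\sub L$. Then $C\cap K$ is clopen in $K$, so $r^{-1}(C\cap K)$ is clopen in $L$. The symmetric difference $D=C\,\triangle\, r^{-1}(C\cap K)$ is clopen and does not meet $K$, because $r$ is the identity on $K$. A clopen, hence compact, subset of $L$ consisting only of isolated points is finite, so $D$ is a finite subset of $N$. For $n\notin D$ we have $n\in C$ if and only if $x_n\in C\cap K$, which gives $(\delta_{x_n}-\delta_n)(C)=0$ for all but finitely many $n$.

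For the converse of (a), define $r(n)=x_n$ and $r|_K=\mathrm{id}$. Since $L$ is zero-dimensional and $N$ is discrete, it suffices to check continuity at points $x\in K$.
\begin{itemize}
\item Let $U\sub K$ be clopen in $K$ with $x\in U$.
\item By compactness, and since every point of $N$ is isolated, $U$ extends to a clopen $C\sub L$ with $C\cap K=U$: write $U=V\cap K$ for a clopen $V\sub L$, using zero-dimensionality together with a finite subcover.
\item The hypothesis gives $\delta_{x_n}(C)=\delta_n(C)$ for all but finitely many $n$, that is, $n\in C$ if and only if $x_n\in U$.
\item Removing those finitely many $n$ from $C$ leaves an open neighbourhood of $x$ whose image under $r$ lies in $U$.
\end{itemize}

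For part (b), the forward direction uses the measures $\mu_n=E^*\delta_n\in M(K)$. These are bounded by $\norm{E}$ and satisfy $\mu_n(f)=(Ef)(n)$. Given a clopen $C\sub L$, let $f=\chi_{C\cap K}\in C(K)$. Then $g=Ef-\chi_C$ is continuous on $L$ and vanishes on $K$. Since $L$ is compact and $N$ is a discrete set whose accumulation points all lie in $K$, we get $g(n)\to 0$. This says exactly that $(\mu_n-\delta_n)(C)=\mu_n(C\cap K)-\delta_n(C)\to 0$.

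For the converse of (b), define $Ef|_K=f$ and $(Ef)(n)=\mu_n(f)$. Clearly $\norm{Ef}\le \max(1,\sup\norm{\mu_n})\norm f$. It remains to show that $Ef$ is continuous at points of $K$.
\begin{itemize}
\item Let $f=\chi_U$ with $U$ clopen in $K$, extended to a clopen $C$ as in (a). Then $Ef-\chi_C$ vanishes on $K$ and tends to $0$ along $N$, so it is continuous. Hence $Ef$ is continuous.
\item By linearity, $Ef$ is continuous for every simple clopen function $f$.
\item In a zero-dimensional compact space these functions are dense in $C(K)$ by Stone--Weierstrass. The set of $f$ with $Ef$ continuous is closed, because $E$ is bounded into $\ell_\infty(L)$ and uniform limits of continuous functions are continuous.
\end{itemize}

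The main point needing care is the extension of clopen subsets of $K$ to clopen subsets of $L$, together with the observation that sequences in $N$ can only accumulate in $K$. Everything else is routine.
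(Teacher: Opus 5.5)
Your proof is correct. The paper does not prove this lemma; it quotes it as a standard fact from \cite[Lemma 2.7]{MP18}. Your argument is the standard verification: $x_n=r(n)$ and $\mu_n=E^*\delta_n$ for the forward directions, and for the converses, continuity checked on characteristic functions of clopen sets, then extended by linearity and by uniform density of simple clopen functions.

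The one fact you assert rather than prove is that $L$ is zero-dimensional, which you use to extend a clopen $U\subseteq K$ to a clopen $C\subseteq L$ with $C\cap K=U$. This can be checked directly. Separate the disjoint compact sets $U$ and $K\setminus U$ by disjoint open sets $O_1,O_2\subseteq L$. Then $L\setminus(O_1\cup O_2)$ is a compact subset of $N$, hence finite. So $\overline{O_1}\subseteq O_1\cup\bigl(L\setminus(O_1\cup O_2)\bigr)$, and $O_1$ is closed, since isolated points outside $O_1$ cannot lie in its closure. Clearly $O_1\cap K=U$. Intersecting $O_1$ with a given open neighbourhood also yields the zero-dimensionality of $L$ itself.
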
 

For use in the next section we state here the following.

\begin{lemma} \label{3:2}
Let $\cA$ be an almost disjoint family of subsets of a countable set $N$ and suppose that $S\sub N$ is such that
$A\cap S$ is infinite for every $A\in\cA$. Consider the almost disjoint family $\cB=\{A\cap S: A\in\cA\}$
of subsets of $S$ and the bijection $\alpha:\cB\to\cA$ given by $\alpha(B)\cap S=B$. 

\begin{enumerate}[(a)]
\item If we declare that the sets $\{B\}\cup \big(\alpha(B)\sm F\big)$ with $F$ finite form a local base at $B\in K_\cB$
then $K_\cB\cup (N\sm S)$ is a countable discrete extension of $K_\cB$.
\item The space $K_\cB\cup (N\sm S)$ is homeomorphic to $K_\cA$.
\item If there is a bounded sequence $(\nu_n)_{n\in N\sm S}$ in $M(K_\cA)$ of measures supported by $S\cup\{\infty\}$ 
such that  $\lim_{n\in N\sm S} (\nu_{n}-\delta_n)(C)=0$ for every clopen set $C\sub K_\cA$ then
$\JL_p(\cB)$ is isomorphic to a complemented subspace of $\JL_p(\cA)$ for any $p\in (1,\infty]$.
\end{enumerate}
\end{lemma}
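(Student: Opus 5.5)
For part (a), I will check directly that $L=K_\cB\cup(N\sm S)$, with the declared neighbourhoods, is a compact Hausdorff space in which $K_\cB$ is a closed subspace and the points of $N\sm S$ are isolated. Points of $N\sm S$ are declared isolated. At $B\in\cB$ the basic sets are $\{B\}\cup(\alpha(B)\sm F)$, and their trace on $K_\cB$ is $\{B\}\cup(B\sm F)$, which is exactly the original neighbourhood base of $B$ in $K_\cB$. Neighbourhoods of $\infty$ are complements of compact sets, namely complements of finitely many basic neighbourhoods of points of $\cB$ together with finitely many points of $N$.

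Hausdorffness follows from almost disjointness: $\alpha(B)\cap\alpha(B')$ is finite for $B\neq B'$. Compactness can be checked directly, but it follows more quickly from (b).

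For part (b), I define $h:K_\cA\to L$ by $h(n)=n$ for $n\in N$, $h(A)=A\cap S=\alpha^{-1}(A)$, and $h(\infty)=\infty$. This is a bijection. By construction $h$ carries the basic neighbourhoods $\{A\}\cup(A\sm F)$ onto the basic neighbourhoods $\{B\}\cup(\alpha(B)\sm F)$. Points of $N$ are isolated on both sides. At $\infty$ both spaces use the one-point-compactification neighbourhoods of the same underlying locally compact space. So $h$ is a homeomorphism, and (a) follows as well. It remains to note that $K_\cB$ sits inside $L$ as the closed set $L\sm(N\sm S)$.

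For part (c), I transport the measures $\nu_n$ along $h$. Since they are supported by $S\cup\{\infty\}\subseteq K_\cB$, they become measures on $K_\cB$. Clopen sets of $L$ correspond to clopen sets of $K_\cA$, so Lemma \ref{3:1}(b) yields an extension operator $E:C(K_\cB)\to C(L)\cong C(K_\cA)$. Concretely,
\[ Ef|_{K_\cB}=f, \qquad Ef(n)=\nu_n(f)\quad\mbox{for } n\in N\sm S. \]
This settles the case $p=\infty$: $P(g)=E(g|_{K_\cB})$ is a bounded projection, and restriction is an isomorphism of its range onto $C(K_\cB)$.

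For $1<p<\infty$ the same formulas should work. The restriction map $R:\JL_p(\cA)\to\JL_p(\cB)$, $g\mapsto g|_{K_\cB}$, is bounded. It preserves the values $g(A)=g(h(A))$, so $\norm{\cdot}'_p$ is unchanged and the sup norm does not increase. For the extension, take $f\in\JL_p(\cB)$, so $f(\infty)=0$. Then $Ef$ has the same values at the points of $\cA\cong\cB$ as $f$, hence $\norm{Ef}'_p=\norm{f}'_p$. Also $\norm{Ef}_\infty\le\max(1,\sup_n\norm{\nu_n})\norm f_\infty$, because $|\nu_n(f)|\le\norm{\nu_n}\norm f_\infty$. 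Thus $E$ maps $\JL_p(\cB)$ boundedly into $\JL_p(\cA)$ with $RE=\mathrm{id}$. Consequently $ER$ is a bounded projection of $\JL_p(\cA)$ onto a subspace isomorphic to $\JL_p(\cB)$.

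The main obstacle I expect is justifying that Lemma \ref{3:1}(b) produces exactly $Ef(n)=\nu_n(f)$, and that this function is continuous on $L$. If I use the formula directly, I must prove continuity at $\infty$ and at the points $B$. Continuity at $B$ amounts to showing that $\nu_n(f)-f(n)\to0$ along $n\in\alpha(B)\sm S$, where $f(n)$ is read off the extension. To do this I would approximate $f$ uniformly by simple functions built from clopen sets of $K_\cB$. Each such clopen set is the trace of a clopen set $C\sub L$, and for these $(\nu_n-\delta_n)(C)\to0$ by hypothesis. Boundedness of the sequence $(\nu_n)$ then gives the uniform approximation argument.
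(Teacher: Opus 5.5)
Your proposal is correct and follows essentially the paper's route. For (b) you extend $\alpha$ to a homeomorphism $K_\cB\cup(N\sm S)\to K_\cA$. For (c) you use the extension operator $Ef(n)=\int f\,{\rm d}\nu_n$, which is bounded in the supremum norm and leaves $\norm{f}'_p$ unchanged because the $\nu_n$ live on $S\cup\{\infty\}$.

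There are only two minor differences. In (a) you obtain compactness from (b), whereas the paper checks directly how infinite subsets of $N\sm S$ accumulate. In (c) you verify continuity of $Ef$ by hand, by approximating with clopen sets, instead of simply invoking Lemma~\ref{3:1}(b).
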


\begin{proof}
For the first statement just note that if a set  $D\sub N\sm S$ is infinite then either $D\cap A$ is finite for every
$A\in\cA$ and $D$ clusters at $\infty$ or $D\cap A$ is infinite for some $A\in\cA$ and the elements of $D\cap A$
converge to $A\cap S$. Part $(b)$ follows from the fact that, clearly, $\alpha$ extends to a homeomorphism
$K_\cB \cup (N\sm S)\to K_\cA$. 

For the final assertion we may think, by $(b)$, that $K_\cB$ is a subspace of $K_\cA$:
we use Lemma \ref{3:1}(b) and the fact that $E:C(K_\cB)\to C(K_\cA)$ defined
by $Ef(n)=\int_{S\cup\{\infty\}} f\; {\rm d}\nu_n$ for $n\in N\sm S$ is an extension operator that is bounded in supremum norms
and does not change $\norm {f}'_p$ as all the measures $\nu_n$ are supported by $S$.
\end{proof}

\section{The same $\JL_p$-spaces from finitely branching trees.}\label{4}

We first note  the following

\begin{lemma} \label{4:1}
If $T\sub \omega^{<\omega}$  is a tree and $S$ is its subtree
then $K_S$ is a retract of $K_T$. 
\end{lemma}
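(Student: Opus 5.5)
We build an explicit retraction $r:K_T\to K_S$ and check its continuity with Lemma~\ref{3:1}(a). Here $K_S$ is the subspace of $K_T$ made of the nodes of $S$, the branches $x\in[S]\sub[T]$, and $\infty$. This is a closed subspace. Its topology agrees with the subspace topology, because for $x\in[S]$ every node of $B(x)$ lies in $S$, since $S$ is downward closed. The complement $K_T\sm K_S$, however, contains the branches in $[T]\sm[S]$ as well as the nodes in $T\sm S$. So $K_T$ is not literally a countable discrete extension of $K_S$, and the retraction has to be defined on the non-isolated points too.

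Define $r$ to be the identity on $K_S$. For a node $t\in T\sm S$, let $s(t)$ be its longest initial segment lying in $S$; it exists because the root is in $S$ (assuming $S\neq\emptyset$). Put $r(t)=s(t)$. Every $x\in[T]\sm[S]$ leaves $S$ at some finite level, so it has a longest initial segment $s(x)\in S$, and we put $r(x)=s(x)$. Points of $[T]\sm[S]$ are thus sent to isolated points of $K_S$. Finally $r(\infty)=\infty$.

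Next comes continuity. Points of $T$ are isolated, so nothing needs checking there. At $x\in[T]\sm[S]$, take the neighbourhood consisting of $x$ together with the nodes $x|n$ for $n>|s(x)|$. Every node in it has longest $S$-segment equal to $s(x)$, so $r$ is constant there. At $x\in[S]$, a basic neighbourhood of $x$ in $K_T$ is $\{x\}\cup B(x)\sm F$, and all of it lies in $K_S$, where $r$ is the identity, so $r$ is continuous at $x$.

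The main obstacle is continuity at $\infty$. We must show that for every basic neighbourhood $W$ of $\infty$ in $K_S$, the set $r^{-1}(W)$ is a neighbourhood of $\infty$. Such a $W$ omits only finitely many branches $y_1,\dots,y_k\in[S]$ and the nodes of $S$ lying in a set of the form $F\cup\bigcup_i B(y_i)$ with $F$ finite. So we have to check that $r^{-1}(K_S\sm W)$ is compact. It equals the union of $\{y_i\}\cup B(y_i)$, of $r^{-1}(F)$, and of the preimages of the nodes on the $B(y_i)$. The difficulty is that the preimage of a single node $\sigma$ contains all of $\suc_T(\sigma)\sm S$ together with the cones above those successors. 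If $T$ is infinitely branching, this is an infinite discrete set accumulating only at $\infty$, and compactness fails. Likewise, an infinite set of nodes $\sigma\in B(y_i)$ each carrying a $T\sm S$ cone, with the images converging to $y_i$, is fine only if those cones converge to $y_i$ in $K_T$, and that again needs finiteness.

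To handle this, I would first try to redefine $r$ on $T\sm S$ so that the only way to approach the points of $K_S$ is along the branches. On nodes of $T\sm S$ that do not lie near any branch of $[S]$, $r$ should send them toward $\infty$ rather than to $s(t)$. Concretely, I would enumerate $T\sm S$ and map its elements injectively to nodes of $S$ that go off to infinity, giving a new assignment $t\mapsto x_t$. I would then verify the criterion of Lemma~\ref{3:1}(a): $\lim(\delta_{x_t}-\delta_t)(C)=0$ for every clopen $C$. Since clopen sets are finite unions of $B(x)$'s modulo finite sets, or complements of such, this reduces to checking one branch at a time. For a fixed branch $B(x)$, only finitely many $t\in T\sm S$ lie on it when $x\in[S]$, so the requirement is simply that $x_t\notin B(x)$ eventually. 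That can be arranged by choosing the $x_t$ injectively, as long as we avoid a growing finite set of already-chosen branches. If $[S]$ is countable this diagonalization works directly. In general I expect the step "only finitely many $x_t$ meet each $B(x)$" to be the main point to verify, and I would fall back on the finitely branching case, where the original $r(t)=s(t)$ works by compactness.
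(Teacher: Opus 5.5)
There is a genuine gap: you never arrive at a continuous retraction, and the fallback you end on is false. The map $r(t)=s(t)$, $r(x)=s(x)$ is discontinuous at $\infty$ even for finitely branching $T$. Take $T=2^{<\omega}$ and let $S$ consist of the initial segments of the constant zero sequence. Every branch $x$ with $x(0)=1$ has $s(x)$ equal to the root $\emptyset$, so $r^{-1}(\{\emptyset\})$ contains uncountably many branches. A compact subset of $K_T\setminus\{\infty\}$ contains only finitely many branches, so any sequence of distinct branches converges to $\infty$ in $K_T$. Since $r(\infty)=\infty\neq\emptyset$, the preimage of the clopen point $\{\emptyset\}$ is not closed. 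Finite branching gives no compactness here, because the cones above successors $\tau\notin S$ of nodes of $S$ typically carry infinitely many branches. Your second idea does not close the gap either. Lemma~\ref{3:1}(a) is a criterion for countable discrete extensions, which $K_T$ is not over $K_S$ (as you noted yourself). It also says nothing about where the branches in $[T]\setminus[S]$ should go, and the diagonalization is left unverified.

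The missing idea is that points of $K_T\setminus K_S$ need not be sent to ``nearby'' points of $K_S$ at all. The paper's retraction is the identity on $K_S$ and sends every node of $T\setminus S$ and every branch in $[T]\setminus[S]$ to $\infty$. Continuity then follows from observations you already made:
\begin{itemize}
\item nodes are isolated;
\item for $x\in[S]$ the basic neighbourhoods $\{x\}\cup B(x)\setminus F$ lie in $K_S$, since $S$ is downward closed;
\item for $x\in[T]\setminus[S]$ the tail neighbourhood $\{x\}\cup\{x|n: x|n\notin S\}$ misses $K_S$, so $r$ is constantly $\infty$ there;
\item if $W$ is an open neighbourhood of $\infty$ in $K_S$, then $r^{-1}(K_S\setminus W)=K_S\setminus W$ is compact, hence closed in $K_T$, so $r^{-1}(W)$ is open.
\end{itemize}
Every fibre other than $r^{-1}(\infty)$ is a single point, so the obstacle you identified at $\infty$ does not arise.
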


\begin{proof}
The retraction is obtained by sending all the nodes from $T\sm S$ and all the branches
from $[T]\sm [S]$ to infinity.
\end{proof}

We fix the following notation for the rest of  this section. For any natural number $m\ge 2$ let

\begin{enumerate}[(i)]
\item $T_m$ denote the full $m$-adic tree $m^{<\omega}$; we write  $T_2=T$ for simplicity;
\item $\cA_m$ be the almost disjoint family of branches of $T_m$;
\item $K_m$ be the compact space defined by $\cA_m$.
\end{enumerate}

We shall consider some trees that may be obtained from the
dyadic tree $T$. For a natural number $k$ we write 
\[ T^{(k)}=\bigcup_{n\in \omega}\lev_{kn}(T).\]
Note that $T^{(k)} $ is another tree (though it is not a subtree of $T$);
 in particular $T^{(2)}$ consists of elements from all even levels of $T$.

 Given $x\in\omega^\omega$, $a_x$ will denote the basic clopen set of $K_{\omega^{<\omega}}$ that consists of $x$ and of all initial segments of $x$.

\begin{lemma}\label{4:2}
Given $k\ge 2$, let $\cB$ be the almost disjoint family related to the  branches of $T^{(k)}$.
Then $\JL_p(\cB)$ is isomorphic to a complemented subspace of $\JL_p(\cA_2)$
for any $p\in (1,\infty]$.
\end{lemma}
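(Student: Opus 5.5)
This is a direct application of Lemma \ref{3:2}(c) with $N=T=2^{<\omega}$, $\cA=\cA_2$ and $S=T^{(k)}=\bigcup_n \lev_{kn}(T)$. Every branch $B(x)$, $x\in 2^\omega$, meets $S$ in the infinite set $\{x|kn:n\in\omega\}$. Hence $\cB=\{B(x)\cap S: x\in 2^\omega\}$ is exactly the almost disjoint family of branches of $T^{(k)}$, and the bijection $\alpha$ is the obvious one. It therefore suffices to produce a bounded sequence $(\nu_t)_{t\in T\sm S}$ in $M(K_{\cA_2})$ of measures supported by $S\cup\{\infty\}$ such that $\lim_t(\nu_t-\delta_t)(C)=0$ for every clopen $C\sub K_{\cA_2}$.

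The natural first choice is $\nu_t=\delta_{\pi(t)}$. Here, for $t\in T\sm S$ with $kn<|t|<k(n+1)$, $\pi(t)$ is a node of level $k(n+1)$ extending $t$, say $\pi(t)=t\sfrown 0\cdots0$. One could instead take the predecessor $t|kn$, but that choice fails: infinitely many nodes share the same predecessor only if... in fact each $s\in S$ has only finitely many $t$ with $t|kn=s$, so the predecessor would also work. I prefer the predecessor map $\pi(t)=t|kn$, because then $\pi(t)\prec t$ and $\pi(t)$ lies on every branch through $t$. These are Dirac measures of norm one, so the sequence is bounded; in fact this gives a retraction via Lemma \ref{3:1}(a).

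The main step is to verify the convergence condition. Every clopen $C\sub K_{\cA_2}$ is, modulo a finite set of isolated points, either a finite union of sets $\{x\}\cup B(x)$ (if $\infty\notin C$), or the complement of such a union (if $\infty\in C$). Finite modifications do not matter, since each $s\in S$ equals $\pi(t)$ for at most $2^{k}$ many $t$. It therefore suffices to show that for each fixed $x\in 2^\omega$, $(\delta_{\pi(t)}-\delta_t)(B(x))\to0$, i.e. $\pi(t)\in B(x)\iff t\in B(x)$ for all but finitely many $t$. One direction is immediate: if $t\prec x$ then $\pi(t)\prec x$. The obstacle is the converse. If $\pi(t)=x|kn$ but $t\not\prec x$, then $t$ branches off $x$ within the $k$ levels above $x|kn$. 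For each $n$ there are at most $2^k$ such $t$, but as $n$ varies there are infinitely many, so this is a genuine problem.

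To fix this I would switch to the successor choice. Let $\pi(t)$ be an extension of $t$ at level $k(n+1)$, which satisfies $\pi(t)\in B(x)\Rightarrow t\prec x$. The converse fails when $t\prec x$ but $\pi(t)\not\prec x$. The remedy is to use averaging measures,
\[
\nu_t=2^{-(k(n+1)-|t|)}\sum\{\delta_s: s\in\lev_{k(n+1)}(T,t)\}.
\]
Then $\nu_t(B(x))$ equals $2^{-(k(n+1)-|t|)}$ when $t\prec x$ and $0$ otherwise, while $\delta_t(B(x))\in\{0,1\}$. This still does not tend to $0$. So instead I combine both: put $\nu_t=\delta_{t|kn}+\mu_t-\lambda_t$, where the corrections are chosen so that the mass on each branch through $t$ matches. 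Making this bookkeeping work, with bounded norms, is where I expect the difficulty to lie. If it fails, I will use Lemma \ref{3:1}(b) directly with signed combinations along the finitely many nodes of the block between levels $kn$ and $k(n+1)$.
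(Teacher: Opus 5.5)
Your reduction to Lemma \ref{3:2}(c) with $S=T^{(k)}$ is how the paper proceeds, but the proposal never does the one substantive step: it does not produce the measures $\nu_t$. The predecessor map, a single successor and the normalized average all fail, as you observe. The final suggestion $\delta_{t|kn}+\mu_t-\lambda_t$ is left unspecified, so as it stands there is no proof.

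Your parenthetical claim that the predecessor map yields a retraction is false. In fact no choice $\nu_t=\delta_{x_t}$ can work here: for each $t\in T\sm S$, the set of branches $x$ with $\delta_{x_t}(B(x))\neq\delta_t(B(x))$ has interior meeting $[\tau]$ for every $\tau\prec t$. A Baire category argument on $2^\omega$ then gives a branch that is exceptional for infinitely many $t$. So genuinely signed measures are needed. Also, your reduction to single branches tacitly uses $\nu_t(K_2)=1$. That is what controls clopen sets containing $\infty$, and it is automatic for Dirac measures but fails once you drop normalization.

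The missing idea is one step from your averaging attempt: do not normalize, and put the excess mass at $\infty$, which Lemma \ref{3:2}(c) allows. For $t\in\lev_{kn+j}(T)$ with $0<j<k$, let
\[ \nu_t=\sum_{s\in\lev_{k(n+1)}(T,t)}\delta_s-\big(2^{k-j}-1\big)\delta_\infty . \]
Each branch through $t$ passes through exactly one of these $2^{k-j}$ nodes $s$, and no other branch passes through any of them. Hence $\nu_t(B(x)\cup\{x\})=\delta_t(B(x)\cup\{x\})$ for all $t$ and $x$, with no exceptions. Moreover $\nu_t(K_2)=1$ and $\|\nu_t\|<2^k$. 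For a finite set $F$ of nodes, $\nu_t(F)=0$ once $|t|$ exceeds the heights of the elements of $F$.

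Now take $D=\bigcup_{i\le r}\big(B(x_i)\cup\{x_i\}\big)\triangle F_i$. Then $\nu_t(D)=\delta_t(D)$ as soon as $|t|$ exceeds the heights of all elements of the $F_i$ and of the splitting nodes of the $x_i$. The same holds for $K_2\sm D$ because $\nu_t(K_2)=\delta_t(K_2)$. This is exactly the paper's construction, and it completes the application of Lemma \ref{3:2}(c).
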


\begin{proof}
Following Lemma \ref{3:2}, to every $\sigma$ from some  level of $T$ 
 which is not divisible by  $k$,
we assign a measure $\nu_\sigma$ on the set of nodes of $T^{(k)}$.

Let $\sigma\in\lev_{nk+j}(T)$ where $0<j<k$. Writing $2^{k-j}$ for the set of all 0-1 sequences of length $k-j$, we set
\[ \nu_\sigma=\sum_{\eps\in 2^{k-j}} \delta_{\sigma\sfrown\eps} - \big(2^{k-j}-1\big)\cdot\delta_\infty.\]

We claim that for every clopen set $C$, $\big(\nu_\sigma-\delta_\sigma\big)(C)=0$
except for finitely many $\sigma$'s. Notice first that $\nu_\sigma(K_2) = 1 = \delta_\sigma(K_2)$. 
If $C$ is a clopen subset of $K_2$, then either $C$ or $K_2\setminus C$ is of the form 
\[ D = \bigcup_{i=1}^n a_{x_i}\bigtriangleup F_i\]
 with $F_i \sub T$ finite. If we take $\sigma\in T$ of larger height than any node in 
 $\bigcup_i F_i$ and any bifurcation node of the branches $x_i$, then
\[\nu_\sigma(D) = \delta_\sigma(D) = \begin{cases} 1 & \text{ if } \sigma\prec x^i \text{ for some }i,\\ 0 & \text{ otherwise.}\end{cases}\]

Hence, we can apply Lemma \ref{3:2}(c) and the proof is complete.
\end{proof}

In the result below we use the well-known Pe{\l}czy\'nski decomposition method. In one of its variants, it states that two Banach spaces $X$ and $Y$ are
isomorphic provided that the following three conditions hold:

\begin{enumerate}[PDM(1)]
\item $X\simeq X\oplus X$ and $Y\simeq Y\oplus Y$;
\item $X$ is isomorphic to a complemented subspace of $Y$; 
\item $Y$ is isomorphic to a complemented subspace of $X$.
\end{enumerate}
The proof is a simple computation. Indeed,  if $X\simeq Y\oplus B$ and $Y\simeq A\oplus X$, then
\[ X\simeq Y\oplus B \simeq Y\oplus Y \oplus B \simeq Y \oplus X \simeq A\oplus X \oplus X \simeq A\oplus X \simeq Y.\]

In what follows we will denote by $\JL^0_p(\cA)$ the subspace of $\JL_p(\cA)$ made of all functions that vanish at $\infty$. Notice for $p\in(1,\infty)$ this is $\JL_p(\cA)$ itself, while for $p=\infty$, it is a hyperplane of $\JL_\infty(\cA)$ isomorphic to $\JL_{\infty}(\cA)$.

\begin{theorem}\label{4:4}
For every $p\in (1,\infty]$ and every $m\ge 2$ the Banach space $\JL_p(\cA_m)$ is isomorphic to 
$\JL_p(\cA_2)$.
\end{theorem}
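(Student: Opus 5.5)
The plan is to use the Pe{\l}czy\'nski decomposition method with $X=\JL_p(\cA_m)$ and $Y=\JL_p(\cA_2)$. We may assume $m>2$. Throughout, we work with $\JL^0_p$ where convenient: it equals $\JL_p$ for $p<\infty$ and is isomorphic to it for $p=\infty$.

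\emph{Step 1: $Y$ is complemented in $X$.} Since $T_2$ is a subtree of $T_m$, Lemma \ref{4:1} gives a retraction $r:K_m\to K_2$ that sends the nodes of $T_m\sm T_2$ and the branches in $[T_m]\sm[T_2]$ to $\infty$. The map $f\mapsto f\circ r$ is a norm-one extension operator in the supremum norm. For $f$ vanishing at $\infty$, $f\circ r$ agrees with $f$ on the branches of $T_2$ and vanishes on all other branches. Hence $\norm{f\circ r}'_p=\norm f'_p$. Thus $E(f)=f\circ r$, together with the projection $g\mapsto E(g|_{K_2})$, shows that $\JL_p^0(\cA_2)$ is complemented in $\JL^0_p(\cA_m)$.

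\emph{Step 2: $X$ is complemented in $Y$.} Choose $k$ with $2^k\ge m$. In $T^{(k)}$ every node has exactly $2^k$ immediate successors. So $T^{(k)}$ is a copy of $T_{2^k}$, which contains a copy of $T_m$ as a subtree (keep $m$ of the $2^k$ successors at each node). By the argument of Step 1, $\JL_p(\cA_m)$ is complemented in $\JL_p(\cB)$, where $\cB$ is the family of branches of $T^{(k)}$. By Lemma \ref{4:2}, $\JL_p(\cB)$ is complemented in $\JL_p(\cA_2)$.

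\emph{Step 3: the square conditions PDM(1).} Removing the root of $T_m$ leaves $m$ disjoint copies of $T_m$. The branch family $\cA_m$ is then the disjoint union of $m$ families, each isomorphic to $\cA_m$ and living on disjoint sets of nodes. A function in $\JL^0_p$ is determined by its restrictions to these pieces. The sup-norm and the $\ell_p$-norm of the combined function are equivalent to the $\ell_\infty$-sum, respectively the $\ell_p$-sum, of the norms of the pieces, with constants depending only on $m$. The finitely many removed nodes only contribute a finite-dimensional summand, which is absorbed because $\JL_p(\cA)\simeq c_0\oplus\JL_p(\cA)$. This gives $X\simeq X^m$, and in the same way $Y\simeq Y^2$.

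The main obstacle is to upgrade $X\simeq X^m$ to $X\simeq X\oplus X$ for $m>2$. I would first try to avoid it by running the decomposition using only $Y\simeq Y\oplus Y$. Write $X\simeq Y\oplus B$ and $Y\simeq A\oplus X$. Then
\[
X\simeq Y\oplus Y\oplus B\simeq Y\oplus X.
\]
To conclude, I would exploit the concrete complement from Step 1. The complement $B$ consists of functions supported on the nodes and branches of $T_m$ lying off $T_2$. This region is a finite-dimensional piece plus a countable disjoint union of copies of $T_m$, one hanging from each node of $T_2$ at each digit $\ge 2$. Such a union is an $\ell_\infty$- (respectively $\ell_p$-) sum of copies of $X$, in the same c$_0$/$\ell_p$ sense as in Step 3. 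Taking such sums of copies of $X$ should then allow an Eilenberg-swindle variant of the method, applied to the infinite sum $(\bigoplus_n X)$ of this type. This sum absorbs one more copy of $X$ and one more copy of $Y$, which gives the desired isomorphism $X\simeq Y$.
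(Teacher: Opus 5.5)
Steps 1 and 2 are exactly the paper's. The two arguments diverge in how the decomposition method is closed. The paper never meets your ``main obstacle''. Instead of cutting $T_m$ at the root into $m$ pieces, it cuts $T_m$ along the spine $z_n=(0,\ldots,0)$ into countably many cones above the nodes $s_p=z_n\sfrown k$, $0<k<m$, each a copy of $T_m$. An explicit shift of these cones gives $\JL^0_p(\cA_m)\oplus\JL^0_p(\cA_m)\simeq\JL^0_p(\cA_m)$ for every $m$, so also for $m=2$, and the standard PDM then finishes. You instead hang the cones on $T_2$ and replace PDM(1) by a swindle. That works and makes Step 3 superfluous. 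You are also right that the abstract relations alone cannot close the argument: for $m=3$, a Gowers space $Z\simeq Z^3\not\simeq Z^2$ with $X=Z$, $Y=Z^2$ satisfies all of them. However, your last paragraph is only a sketch, and the step ``absorbs one more copy of $Y$'' is not a mere relabelling.

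To make it precise, write $X^0=\JL^0_p(\cA_m)$ and $Y^0=\JL^0_p(\cA_2)$. Let $\Sigma$ be the space of functions on the disjoint union of the cones $\{t:\sigma\sfrown j\preceq t\}\cup[\sigma\sfrown j]$, for $\sigma\in T_2$ and $2\le j<m$, whose restrictions $g_c$ lie in $X^0$ and satisfy $\|g_c\|_\infty\to 0$. Norm it by $\max\{\sup_c\|g_c\|_\infty,(\sum_c(\|g_c\|'_p)^p)^{1/p}\}$.

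For $p<\infty$ this is not $\ell_p(X)$. It depends on the two seminorms of $X^0$, so it is not functorial under abstract isomorphisms, and the classical swindle identity $\ell_p(Y\oplus B)\simeq\ell_p(Y)\oplus\ell_p(B)$ is unavailable. What you do have is the following. The kernel of your Step 1 projection is isometric to $\Sigma$: the condition $\|g_c\|_\infty\to0$ is exactly continuity at $\infty$, and there is no finite-dimensional leftover. Relabelling cones gives $\Sigma\simeq\Sigma\oplus\Sigma$ and $\Sigma\simeq X^0\oplus\Sigma$. Together with $X^0\simeq Y^0\oplus\Sigma$ and $Y^0\simeq A\oplus X^0$ (Step 2), this is enough:
\[
\Sigma\simeq X^0\oplus\Sigma\simeq Y^0\oplus\Sigma\oplus\Sigma\simeq Y^0\oplus\Sigma\simeq X^0,\qquad Y^0\simeq A\oplus X^0\simeq A\oplus X^0\oplus\Sigma\simeq Y^0\oplus\Sigma\simeq X^0.
\]
The second chain uses $X^0\oplus\Sigma\simeq\Sigma\oplus\Sigma\simeq\Sigma\simeq X^0$.

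The paper's route gives an explicit isomorphism and a self-contained square property. Yours dispenses with square properties altogether and yields $X^0\simeq X^0\oplus X^0$ as a byproduct, since $X^0\simeq\Sigma\simeq\Sigma\oplus\Sigma$.
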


\begin{proof}
As $T$ is a subtree of $T_m$, the space $\JL_p(\cA_2)$ is isomorphic to a complemented subspace of $\JL_p(\cA_m)$ by Lemma \ref{4:1}.

For $m\ge 2$ take $k$ such that $2^k\ge m$ and note that $T_m$ is isomorphic to a subtree of $T^{(k)}$. Hence, 
$\JL_p(\cA_m)$ is isomorphic to a complemented subspace of $\JL_p(\cA_2)$
by Lemma \ref{4:2} and Lemma \ref{4:1}.

Let us  check that $\JL_p(\cA_m)\simeq \JL_p(\cA_m)\oplus \JL_p(\cA_m)$. 
We will define mutually inverse operators
\[U:\JL^0_p(\cA_m)\oplus \JL^0_p(\cA_m) \To \JL^0_p(\cA_m),\]
\[U^{-1}:\JL^0_p(\cA_m)\To \JL^0_p(\cA_m)\oplus \JL^0_p(\cA_m).\]
In order to define $U$ and $U^{-1}$ we need some preliminary notation. For every $n<\omega$, consider $z_n\in T_m$,  the node that has length $n$ and all coordinates are 0, $z_n = (0,0,\ldots,0)$. Let $S$ be the set of all nodes of the tree $T_m$ whose all coordinates are zero except for the last one. We enumerate this set as 
\[S = \{z_n{}^\frown k \in T_m : n<\omega, 0<k<m\} = \{s_p : p<\omega\}.\] 
In the following definition, $t$ and $r$ could be either finite or infinite sequences of $0,1,\ldots,m-1$. The finite sequences are viewed as isolated points of $K_{\mathcal{A}_m}$, and the infinite sequences are identified with branches of $T_m$, hence also points of $K_{\mathcal{A}_m}$.
\[U(f,g)(t) = \begin{cases}   f(r) & \text{ if } t=s_0{}^\frown r,\\ g(s_{p-1}{}^\frown r) & \text{ if } t = s_p^\frown r \text{ for some }p>0,\\ g(t) & \text{ if } t=z_n.
 \end{cases}\]
and $U^{-1}(h) = (f,g)$ where $f(r) = h(s_0^\frown r)$,  $g(s_q{}^\frown r) = h(s_{q+1}{}^\frown r)$, $g(z_n) = h(z_n)$.
 It is easy to check that these operators are bounded for every $p$.
 
We conclude the argument using  the decomposition method.
 \end{proof}

\section{Measuring the  distance}\label{5}
As in the previous section, we denote by
$\cA_m$  the almost disjoint family of branches of $m^{<\omega}$
and   $K_m$ is the compact space defined by $\cA_m$.
The proof consists of three independent steps. First we discretize the images of the basis vectors. Next we show that one coefficient must be large. Finally, we exploit the tree structure to obtain the lower estimate.

\begin{theorem}\label{estimateAnembedding}
Let $n>m\ge 2$ and $1<p<\infty$.
	Suppose that $U:\JL_p({\mathcal{A}_n})\To  \JL_p({\mathcal{A}_m})$ is an isomorphic
	embedding.  Then 
	\[\|U\|\|U^{-1}\|\ge \frac{ n^{1-1/p}}{4(m+1)}.\]
\end{theorem}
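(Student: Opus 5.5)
We normalize so that $\|U^{-1}\|\le 1$, i.e.\ $\|Uf\|_p\ge\|f\|_p$ for all $f$, and aim to show $\|U\|\ge n^{1-1/p}/(4(m+1))$. The proof has three steps, as announced at the start of this section: discretize the images, find one large coefficient, then run a tree argument.

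\emph{Step 1: discretize the images of the basis vectors.} For a node $t$ of $n^{<\omega}$, consider the clopen indicator $\chi_{a_x}$ for branches $x$, and also combinations of such indicators. The relevant test functions are $h_t=\sum_{x\in F}\chi_{a_x}$ restricted to suitable levels. The key test vectors are differences $g=\chi_{a_x}-\chi_{a_y}$ for branches $x,y$ splitting at a node; after truncation these are norm-one elements of $\JL_p(\cA_n)$. The image $U\chi_{a_x}$ is a function in $C(K_m)$ with finite $\|\cdot\|_p'$. Its values on branches $y\in[m^{<\omega}]$ form an $\ell_p$ sequence $c_x(y)=U\chi_{a_x}(y)$. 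We pass to a finite set of branches of $n^{<\omega}$ and use a perturbation and compactness argument, via the Baire category reasoning of Theorem~\ref{2:5}, to assume the $c_x$ are essentially finitely supported and locally constant above some level.

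\emph{Step 2: one coefficient is large.} Since $\|U\chi_{a_x}\|\ge\|\chi_{a_x}\|_p= 1$, either the sup-norm or the $\ell_p$-part of $U\chi_{a_x}$ is at least $1$. The sup-norm is attained near some branch value, because isolated values converge to branch values along branches and to $0$ at $\infty$. Hence there is a branch $\beta(x)$ of $m^{<\omega}$ with $|c_x(\beta(x))|\ge 1/2$. Here we use that the isolated values of a function on $K_m$ converging to $0$ at $\infty$ can only be large near finitely many branches, and otherwise we can move to the $\ell_p$-norm.

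\emph{Step 3: exploit the tree structure.} Fix a node $\tau$ of $n^{<\omega}$ with $n$ successors and pick branches $x_0,\dots,x_{n-1}$ through distinct successors. There are $n$ values $\beta(x_i)$ in a tree that branches only $m$-fold. By the pigeonhole argument of Theorem~\ref{2:6}, after discarding a $1/(m+1)$ fraction, many $\beta(x_i)$ pass through a common node far above the splitting levels. Iterating this down the tree, we choose signs $\eps_i$ and form $f=\sum_i\eps_i\chi_{a_{x_i}}$ cut to the part above $\tau$. Then $\|f\|_p\le n^{1/p}$, since the sup is $1$ and there are $n$ branches. On the other side, at a common isolated node $\sigma$ where all $U\chi_{a_{x_i}}$ are near their values on $\beta(x_i)$, the sup-norm of $Uf$ is at least about $\sum_i|c_{x_i}(\beta(x_i))|/(m+1)\ge n/(2(m+1))$, with signs chosen to align. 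The extra factor $2$ absorbs the error from Step 1. Comparing gives $\|U\|\ge n^{1-1/p}/(4(m+1))$.

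The main obstacle is Step 3: guaranteeing that the large coefficients of the $n$ images can be aligned at a single node of $m^{<\omega}$. I would first try a Baire-category stabilization as in Theorem~\ref{2:6}. This assigns to each branch $x$ a node $\sigma_x$ above which $U\chi_{a_x}$ agrees closely with its branch value, and then gets a dense set of $x$ sharing $\sigma_x$. The $m$-fold branching at $\sigma_x$ should cost only the factor $m+1$.
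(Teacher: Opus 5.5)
\textbf{Step 2 has a genuine gap.} From $\|U\chi_{a_x}\|_p\ge 1$ you cannot extract a branch $\beta(x)$ with $|U\chi_{a_x}(\beta(x))|\ge 1/2$. A norm-one element of $\JL_p(\cA_m)$ can fail to have such a branch in two ways. Its norm may be carried by $\|\cdot\|_p'$ while all its branch values are tiny. Or its supremum may be attained at an isolated node: $\chi_{\{t\}}$ has norm one and no nonzero branch value. Nothing in your argument excludes either possibility for the images $U\chi_{a_x}$. A large branch coefficient can only be forced for differences of many images, and only after the data have been frozen.

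This is what the paper does, writing $e_x=\chi_{a_x}$ and $f_y=\chi_{a_y}$ in $K_m$. It approximates $Ue_x$ by $\sum_{y\in F_x}\vf_x(y)f_y+\xi_x$. Baire category then fixes $|F_x|$, the separation level, the coefficient list and, crucially, the isolated-node part $\xi_x=\xi$ on an uncountable set $X$ dense in some $[\tau]$. After passing to a $\Delta$-system, it compares averages $g_1,g_2$ of $d$ vectors $e_x$ taken from two disjoint cones. Then $\|g_1-g_2\|_p\ge 1$, so $\|U(g_1-g_2)\|_p\ge1$. By Lemma~\ref{additional}, $\|U(g_1-g_2)\|_p'$ is at most $O(d^{1/p-1})+2\eps$. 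Since $\xi$ cancels in the difference, the supremum bound gives $2\max_i|\vf(i)|\ge 1-2\eps$.

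\textbf{Step 3 drops a term.} Once $f$ is cut to the part above $\tau$, you have $f=\sum_i\eps_i(e_{x_i}-\rho)$, where $\rho$ is the indicator of $\{t: t\preceq\tau\}$. Hence
\[ Uf(\sigma)=\sum_i\eps_i\,Ue_{x_i}(\sigma)-\Big(\sum_i\eps_i\Big)\,U\rho(\sigma), \]
and your lower bound ignores the second term. With aligned signs $\sum_i\eps_i\neq 0$. Meanwhile $U\rho(\sigma)$ is an uncontrolled bounded quantity, and it can be close to $c$ precisely at nodes shared by many $\beta(x_i)$, cancelling the first term. For the identity of $\JL_p(\cA_n)$ this happens at the node $\tau$: every $e_{x_i}$ equals its branch value $1$ there, yet the cut sum vanishes. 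Handling this term is the real content of the tree step. It is where both $m+1$ and the second factor $2$ in $4(m+1)$ come from; the Step 1 error disappears as $\eps\to0$.

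The paper fixes a node $\sigma_0$ above which each $Ue_x$ is within $\eps$ of $c f_{y_x}$. It writes $S(\sigma)$, $N(\sigma)$ for the indices $j$ with $\sigma\prec y_{x_j}$, respectively $\sigma\not\prec y_{x_j}$, and takes $\sigma\succeq\sigma_0$ maximal with $|N(\sigma)|<n/(m+1)$. It then splits into two cases.
\begin{enumerate}[(a)]
\item Some $i<m$ has $|U\rho(\sigma\sfrown i)|\ge c/2$. Sum $e_{x_j}-\rho$ over the $j\in N(\sigma\sfrown i)$; by maximality there are at least $n/(m+1)$ of them. There $Ue_{x_j}\approx 0$, so $-U\rho$ supplies about $c/2$ per term.
\item No such $i$ exists. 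By pigeonhole some child has $|S(\sigma\sfrown i)|\ge n/(m+1)$, and each term contributes at least about $c-c/2$.
\end{enumerate}
Balancing the signs so that $\sum_i\eps_i=0$, on a child where both $S$ and $N$ have at least $n/(m+1)$ elements, would also work. Some such device is indispensable, though, and your proposal has none.
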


\begin{proof}
For $x\in n^\omega$ we have a clopen set $C(x)=B(x)\cup\{x\}$ in $K_n$ and we  write
$e_x$ for its characteristic function. Then $e_x$ is a norm-one element of $\JL_p(\cA_n)$.
For clarity, we denote by $f_y$ the element of $\JL_p(\cA_m)$ corresponding to $y\in m^\omega$.
 We can assume that $\|U^{-1}\|=1$ and concentrate on estimating $\|U\|$.
We fix (small) $\varepsilon>0$. 
\medskip

\noindent{\bf Part 1.} Note that every element of $\JL_p(\cA_m)$ can be
approximated in norm by a step functions. Hence to every $x\in. n^\omega$ we can associate

\begin{enumerate}[(i)]
\item a finite set $F_x\sub m^\omega$,
\item a function $\vf_x:F_x\to{\mathbb Q}$,
\item $l_x\in\omega$ such that for distinct $y,y'\in F_x$ we have $y|l_x\neq y|l_x$.
\item a function $\xi_x\in c_{00}(m^{<\omega})$ with rational values,
\end{enumerate} 
so that 
\[ \|Ue_x-h_x\|_p<\eps, \mbox{ where } h_x=\sum_{y\in F_x} \vf_x(y)f_y +\xi_x.\]

By the Baire category argument, we can freeze parameters from countable sets to find
$\tau\in n^{<\omega}$ such that the set $X\sub n^\omega$ of $x$'s with these parameters fixed
is uncountable and dense in $[\tau]$. In other words, for $x\in X$, we have
$|F_x|=k$, $l_x=l$, $\xi_x=\xi\in c_{00}(m^{<\omega})$ and, whenever we list
elements of $F_x$ as $y_0,\ldots, y_{k-1}$ in the lexicographic order then
$\vf_x(y_i)=\vf(i)$ for $i<k$ for some fixed $\vf$.
\medskip
    
\noindent    \textbf{Part 2.} 
There is $i<k$ such that $|\vf(i)|>1/2-\eps$.
\medskip

To prove the claim we use the fact that $X$ is uncountable. By the $\Delta$-system lemma
we find an uncountable $Y\sub X$ such that the family $\{F_x:x\in Y\}$ forms a $\Delta$
system. Then we can find two incomparable nodes $\tau_1,\tau_2$ extending $\tau$ such that
$Y\cap '[\tau_1]$ and $Y\cap [\tau_2]$ are still uncountable.

Consider any $d\in\omega$, pick distinct $x_1^1,\ldots, x_d^1\in Y\cap [\tau_1]$
and $x_1^2,\ldots, x_d^2\in Y \cap [\tau_2]$ and consider the functions
\[ g_1=\frac{1}{d}\sum_{j=1}^d e_{x_j^1},\quad 
g_2=\frac{1}{d}\sum_{j=1}^d e_{x_j^2},\]
\[ h_1=\frac{1}{d}\sum_{j=1}^d h_{x_j^1},\quad 
h_2=\frac{1}{d}\sum_{j=1}^d h_{x_j^2},\]

The supremum norm of $g_1-g_2$ equals 1, so $\|g_1-g_2\|_p\ge 1$. Hence
$\|Ug_1-Ug_2\|_p\ge 1$ (as $\|U^{-1}\|=1$). The point is that 
$\|Ug_1-Ug_2\|_p'< 1$ if $d$ is large enough --- see Lemma \ref{additional} below. Hence  
\[ 1\le  \|Ug_1-Ug_2\|_\infty\le \|h_1-h_2\|_\infty+2\eps \le \max_i 2|\vf(i)|+2\eps,\] 
and the claim follows.
\medskip

\noindent {\bf Part 3.}
From now on, we can assume that large constant $\vf(i)$ we found above is positive. 
We denote it by  $c=\vf(i)$.
It is now enough to estimate $\|U\|$ assuming that there is $\sigma_0\in m^{<\omega}$ such that
for every $x\in X$  there is a unique element  $y_x\in [\sigma_0]$ such that 
\[ \|\big(Ue_x\big)\cdot \chi_{[\sigma_0]}-cf_{y_x}\|_\infty<\eps.\]

Pick $x_j\in [\tau\sfrown j]\cap X$ for every $j<n$ and write $f_{x_j}=f_j$ for simplicity. 
For every $\sigma$ with $\sigma_0\preceq \sigma$ write 
\[ S(\sigma)=\{j<n: \sigma\prec y_j\},\quad 	N(\sigma)=\{j<n: \sigma\not\prec y_j\}.\]
We have $N(\sigma_0)=\emptyset$ so there is a maximal $\sigma\succeq\sigma_0$ having the property
\[ \big| N(\sigma)\big|< \frac{n}{m+1}.\]

Write $\rho$ for the characteristic function of the set $R=\{t\in n^{<\omega}: t\preceq \tau\}$.
We examine the values which $U\rho$ assumes at 
 the  nodes $\sigma\sfrown i$.
 
 Suppose that there is $i<m$ such that 
 $|U\rho(\sigma\sfrown i)|\ge c/2$; let $U\rho(\sigma\sfrown i)\ge c/2$ (the negative case is analogous).
  Then for every $k\in N(\sigma\sfrown i)$  we have
\[ |Ue_{x_k}(\sigma\sfrown i)|<\eps \mbox{ so } U(e_{x_k}-\rho)(\sigma\sfrown i)\ge c/2-\eps.\]
 As $\big| N(\sigma\sfrown i)\big|\ge n/(m+1)$
by the maximality of $\sigma$, it follows that
\[ \mbox{ for } h=\sum_{j\in N(\sigma\sfrown i)} (e_{x_j}-\rho) \mbox{ we have} Uh(\sigma\sfrown i) \ge (c/2-\eps)\frac{n}{m+1}.\] 

As $\|h\|_\infty=1$ and $\|h\|_p'\le n^{1/p}$, we have $\|h\|_p\le n^{1/p}$ so
\begin{equation}\label {est}
\|U\|\ge (c/2-\eps)\frac{n^{1-1/p}}{m+1}.
\end{equation}

In the opposite case, note that 
\[ \big| S(\sigma)\big| > n-\frac{n}{m+1}=n\frac{m}{m+1},\]
and we choose  $i<m$ such that 
\[ \big| S(\sigma\sfrown i)\big|\ge \frac{n}{m+1}. \]
By a similar argument,
\[ \mbox{ for } h=\sum_{j\in S(\sigma\sfrown i)} (e_{x_j}-\rho) \mbox{ we have}. |Uh(\sigma\sfrown i)|\ge (c/2-\eps)\frac{n}{m+1},\] 
and we again get the estimate as in (\ref{est}).

Finally, the assertion follows from  the estimate (\ref{est}), since $\eps$ can be arbitrarily small (recall that $c\ge 1/2 -\eps$).
\end{proof}	

To complement the above proof we record  the following elementary observation.

\begin{lemma}\label{additional}
Let $v_1,\ldots, v_{2d}\in\ell_p$ be  vectors for which  there are $a_1,\ldots, a_{2d}\in\ell_p$
such that
\begin{enumerate}[(i)]
\item $\|a_i\|_p\le r$ and $\|v_i-a_i\|_p<\eps$ for every $i$;
\item the supports of $a_i$ are finite sets forming a $\Delta$-system and all $a_i$ have the same values on
their common root.
\end{enumerate}
Then the vectors  $w_1=(1/d)\sum_{i=1}^dv_i, w_2=(1/d)\sum_{i=d+1}^{2d}v_i$ satisfy
\[\|w_1-w_2\|_p\le 2d^{1/p-1}\cdot r+2\eps.\]
\end{lemma}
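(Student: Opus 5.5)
We split each $v_i$ into its approximant $a_i$ and the error $v_i-a_i$, and use the $\Delta$-system structure to cancel the root part of the $a_i$ exactly. Write $b_1=(1/d)\sum_{i=1}^d a_i$ and $b_2=(1/d)\sum_{i=d+1}^{2d} a_i$. By the triangle inequality and (i),
\[
\|w_1-b_1\|_p\le \frac1d\sum_{i=1}^d\|v_i-a_i\|_p<\eps,
\]
and similarly $\|w_2-b_2\|_p<\eps$. Hence $\|w_1-w_2\|_p\le\|b_1-b_2\|_p+2\eps$, and it suffices to show that $\|b_1-b_2\|_p\le 2d^{1/p-1}r$.

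Let $R$ be the root of the $\Delta$-system formed by the supports of the $a_i$. Decompose each $a_i=a_i\chi_R+a_i\chi_{\mathrm{supp}(a_i)\sm R}$. By (ii) all the vectors $a_i\chi_R$ coincide; call this common vector $a_R$. Then $b_1=a_R+(1/d)\sum_{i=1}^d a_i'$ and $b_2=a_R+(1/d)\sum_{i=d+1}^{2d}a_i'$, where $a_i'=a_i\chi_{\mathrm{supp}(a_i)\sm R}$. The root parts therefore cancel:
\[
b_1-b_2=\frac1d\Big(\sum_{i=1}^d a_i'-\sum_{i=d+1}^{2d}a_i'\Big).
\]

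The vectors $a_1',\dots,a_{2d}'$ have pairwise disjoint supports, and $\|a_i'\|_p\le\|a_i\|_p\le r$. By disjointness the $p$-th powers of the norms add, so
\[
\Big\|\sum_{i=1}^d a_i'\Big\|_p=\Big(\sum_{i=1}^d\|a_i'\|_p^p\Big)^{1/p}\le d^{1/p}r,
\]
and the same bound holds for the second sum. Applying the triangle inequality to the difference of the two sums and dividing by $d$ gives
\[
\|b_1-b_2\|_p\le \frac{2d^{1/p}r}{d}=2d^{1/p-1}r.
\]
(One could even use disjointness across both sums to get $(2d)^{1/p}r/d$, but the weaker bound matches the statement.) Combining with the first step yields $\|w_1-w_2\|_p\le 2d^{1/p-1}r+2\eps$.

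There is no real obstacle. The only point needing care is that cancelling the root uses condition (ii) in full: the vectors must agree in value on $R$, not merely share the support $R$. The remaining estimate uses only that disjointly supported vectors in $\ell_p$ satisfy $\|\sum u_i\|_p^p=\sum\|u_i\|_p^p$.
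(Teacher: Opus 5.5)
Your proof is correct and follows essentially the paper's route: you pass from the $v_i$ to the $a_i$ at cost $2\eps$, cancel the common root, and use that disjointly supported vectors in $\ell_p$ have additive $p$-th powers of norms. The only difference is cosmetic. The paper treats $\sum_{i=1}^{d}a_i-\sum_{i=d+1}^{2d}a_i$ as $d$ disjoint blocks $a_i-a_{d+i}$, each of norm at most $2r$, whereas you bound the two sums separately by $d^{1/p}r$ and apply the triangle inequality; both give $2d^{1/p}r$, and you also spell out the root cancellation that the paper leaves implicit.
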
	

\begin{proof}
If we write $b_1=\sum_{i=1}^d a_i$, $b_2=\sum_{i=d+1}^{2d} a_i$ then 
$b_1-b_2$ is the sum of $d$ disjoint blocks of norm bounded by $2r$ so 
$\|b_1-b_2\|_p\le 2d^{1/p}\cdot r$.
Moreover, 
$\|w_1-w_2\|_p\le \|1/d(b_1-b_2)\|_p+2\eps$, and the assertion follows. 
\end{proof}

The above theorem has the following $\infty$-version.
Here we write $C_0(K_m)=\{g\in C(K_m): g(\infty)=0\}$.

\begin{theorem}\label{estimateAnembedding2}
Let $n>m\ge 2$ and 
	suppose that $U: C(K_n)\To  C_0(K_m)$ is an isomorphism
	embedding.  Then 
	\[\|U\|\|U^{-1}\|\ge \frac{ n}{4(m+1)}.\]
\end{theorem}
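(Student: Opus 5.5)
We follow the proof of Theorem~\ref{estimateAnembedding} step by step, replacing $\ell_p$-sums by the supremum norm. For $x\in n^\omega$ let $e_x\in C(K_n)$ be the characteristic function of $C(x)=B(x)\cup\{x\}$, and let $f_y\in C_0(K_m)$ be the analogous function for $y\in m^\omega$. We normalise so that $\|U^{-1}\|=1$, which gives $\|Ug\|_\infty\ge\|g\|_\infty$, and we fix a small $\eps>0$.

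\textbf{Part 1 (discretisation).} Every element of $C_0(K_m)$ is uniformly approximable by rational step functions $\sum_{y\in F}\vf(y)f_y+\xi$ with $F\sub m^\omega$ finite and $\xi\in c_{00}(m^{<\omega})$. We attach to each $x$ the parameters $F_x,\vf_x,l_x,\xi_x$ exactly as before. The Baire category theorem then yields $\tau\in n^{<\omega}$ and an uncountable set $X$, dense in $[\tau]$, on which these parameters are frozen.

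\textbf{Part 2 (a large coefficient).} Here the argument is simpler than in the $p$-case, because no $\ell_p$-part of the norm needs to be controlled. Pass to a $\Delta$-system $Y\subseteq X$, and choose incomparable nodes $\tau_1,\tau_2$ extending $\tau$ such that $Y\cap[\tau_i]$ is uncountable. Averaging $d$ of the $e_x$ on each side gives
\[\|g_1-g_2\|_\infty=1,\qquad\text{hence}\qquad \|Ug_1-Ug_2\|_\infty\ge 1.\]
Now compare $Ug_1-Ug_2$ with $h_1-h_2$. The root terms and $\xi$ cancel. The remaining non-root parts of the various $x$ are supported on branches that separate beyond level $l$. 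Points where those branches overlap are finitely many, and their contribution is $O(1/d)$ for large $d$. Consequently $\|h_1-h_2\|_\infty\le \max_i 2|\vf(i)|+O(1/d)$, and so $|\vf(i)|>1/2-\eps$ for some $i$. We may assume this $\vf(i)=c$ is positive. As before, we then obtain $\sigma_0$ and points $y_x\in[\sigma_0]$ with
\[\|(Ue_x)\chi_{[\sigma_0]}-cf_{y_x}\|_\infty<\eps.\]
The point to check carefully is that the whole of $K_m$ is used here, including the fact that the image functions vanish at $\infty$.

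\textbf{Part 3 (tree combinatorics).} This is the key step. Pick $x_j\in[\tau\sfrown j]\cap X$ for $j<n$, and define $S(\sigma)$ and $N(\sigma)$ as before. Choose a maximal $\sigma\succeq\sigma_0$ with $|N(\sigma)|<n/(m+1)$, and let $\rho$ be the characteristic function of $\{t:t\preceq\tau\}$, which lies in $C(K_n)$. Exactly as before we find a node $\sigma\sfrown i$ and an index set $J$ with $|J|\ge n/(m+1)$ such that the values $U(e_{x_j}-\rho)(\sigma\sfrown i)$, for $j\in J$, all have the same sign and absolute value at least $c/2-\eps$. Put $h=\sum_{j\in J}(e_{x_j}-\rho)$.

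The gain over the $p$-case lies in bounding $\|h\|_\infty$. The sets $C(x_j)\sm R$ are pairwise disjoint, since the $x_j$ pass through distinct successors of $\tau$, and on $R$ we have $e_{x_j}-\rho=0$. Hence $\|h\|_\infty\le1$, with no $n^{1/p}$ factor. Therefore
\[\|U\|\ge\|Uh\|_\infty\ge (c/2-\eps)\frac{n}{m+1},\]
and letting $\eps\to0$ with $c\ge1/2-\eps$ gives $\|U\|\|U^{-1}\|\ge n/(4(m+1))$.

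The main obstacle is Part 2. We must verify that the sup-norm averaging still forces some coefficient to be large. This requires controlling the overlaps of the non-root branches at finitely many nodes and at their common initial segments, uniformly in $d$.
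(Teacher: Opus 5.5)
Your Parts 1 and 3 are the paper's argument. This includes the decisive point that $h=\sum_{j\in J}(e_{x_j}-\rho)$ is the indicator function of the disjoint union of the sets $C(x_j)\setminus R$, so $\|h\|_\infty\le 1$ and the factor $n^{1/p}$ in (\ref{est}) becomes $1$.

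The only divergence is Part 2. You call it the main obstacle, but in the supremum-norm setting it is the trivial step. In Theorem~\ref{estimateAnembedding} the $\Delta$-system and the averaging serve only to make $\|Ug_1-Ug_2\|'_p$ small, so that the lower bound $1$ must be carried by the supremum part. Here there is no $\ell_p$-part. The paper simply takes two distinct $x,x'\in X$: then $\|U(e_x-e_{x'})\|_\infty\ge\|e_x-e_{x'}\|_\infty=1$, hence $\|h_x-h_{x'}\|_\infty\ge 1-2\eps$. On the other hand $\|h_x-h_{x'}\|_\infty\le 2\max_i|\vf(i)|$, once the level-$l$ truncations are frozen (see below).

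The inequality $\|h_1-h_2\|_\infty\le 2\max_i|\vf(i)|+O(1/d)$ is true, but the reason you give for it is wrong. Overlaps of non-root branches at \emph{nodes} above level $l$ do not contribute $O(1/d)$. Nothing prevents the $i$-th (non-root) elements of $F_{x^1_1},\ldots,F_{x^1_d}$ from all passing through a common node $t$ with $|t|>l$ that no branch attached to the $x^2_j$ meets. Then $(h_1-h_2)(t)=\vf(i)$ for every $d$; only at branch points is the contribution $O(1/d)$.

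The correct argument has two pieces:
\begin{itemize}
\item At a node of length $\ge l$, each $h_x-\xi$ picks up at most one coefficient. So each average is bounded there by $\max_i|\vf(i)|$.
\item On the finitely many nodes of length $<l$, all the $h_x$ coincide, provided you also freeze the finite set $\{y|l: y\in F_x\}$.
\end{itemize}
This set is one more countable parameter, and it is implicitly needed anyway. It makes the coefficients on the $\Delta$-root agree, which is what your claim that the root terms cancel requires. It also yields a single $\sigma_0$ for all $x\in X$ in Part 3.

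With this repair your route works, but it is a detour around a two-line argument.
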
	
	
\begin{proof}
Let us briefly examine the previous proof in the present case. 
Part 1 remains the same (thanks to the fact that $U$ takes values in $C_0(K_m)$).
The claim of Part 2 is now immediate since $U(e_x- e_{x'})$ must have the supremum norm 
$\ge 1$ whenever $x\neq x'$. Finally, the functions $h$ we used in Part 3 have norms 1 so
we can replace (\ref{est}) by $\|U\|\ge (c/2-\eps)\frac{n}{m+1}.$
\end{proof}

\begin{corollary}   
	If $\Upsilon$ is the tree for which every node at level $n$ has $n+1$ immediate successors, 
	then $\JL_p({\mathcal{A}_\Upsilon})$ is not isomorphic to a subspace of $\JL_p({\mathcal{A}_2})$ for every $p\in (1,\infty]$.
\end{corollary}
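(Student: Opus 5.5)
We argue by contradiction: suppose $V:\JL_p(\cA_\Upsilon)\To\JL_p(\cA_2)$ is an isomorphic embedding with $\|V\|\|V^{-1}\|=M<\infty$. The plan is to find, for each $n$, a copy of $\JL_p(\cA_n)$ inside $\JL_p(\cA_\Upsilon)$ with uniformly controlled constants, compose it with $V$, and contradict Theorem \ref{estimateAnembedding} (for $1<p<\infty$) or Theorem \ref{estimateAnembedding2} (for $p=\infty$) as $n\to\infty$.

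\textbf{Step 1 (a copy of $T_n$ in $\Upsilon$).} Fix $n\ge 2$ and a node $\tau\in\Upsilon$ of length $n-1$. Every node extending $\tau$ has at least $n$ immediate successors. Choose $n$ of them at each step to obtain a subtree $S_n\sub\Upsilon$. Take $S_n$ to consist of the initial segments of $\tau$ together with this $n$-branching part above $\tau$. Then $[S_n]=[\tau]\cap[S_n]$, and the branches of $S_n$ correspond bijectively to those of $T_n$. By Remark \ref{rem1}, the finitely many extra nodes below $\tau$ do not matter: $K_{S_n}$ is homeomorphic to $K_n$ via a homeomorphism which is a bijection on branches. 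Hence $\JL_p(\cA_{S_n})$ is isometric to $\JL_p(\cA_n)$, because the norm $\norm{\cdot}_p$ is preserved by any homeomorphism that maps branches onto branches. By Lemma \ref{4:1}, $K_{S_n}$ is a retract of $K_\Upsilon$, via the retraction $r$ sending everything outside $S_n\cup[S_n]$ to $\infty$. The map $f\mapsto f\circ r$ is then a norm-one extension. It preserves $\norm{\cdot}'_p$ when $f(\infty)=0$, since $f\circ r$ vanishes on branches outside $[S_n]$. So we get an isometric embedding $J_n:\JL^0_p(\cA_n)\To\JL_p(\cA_\Upsilon)$.

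\textbf{Step 2 (composition).} For $1<p<\infty$, the map $U=V\circ J_n:\JL_p(\cA_n)\To\JL_p(\cA_2)$ is an embedding with $\|U\|\|U^{-1}\|\le M$. Theorem \ref{estimateAnembedding} with $m=2$ gives $M\ge n^{1-1/p}/12$ for every $n$. Since $n^{1-1/p}/12\to\infty$, this is a contradiction.

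\textbf{Step 3 ($p=\infty$).} Theorem \ref{estimateAnembedding2} requires the domain to be $C(K_n)$ and the range $C_0(K_2)$, so we adjust both ends. The domain is easy: $J_n$ is defined on all of $C(K_n)$ by $f\mapsto f\circ r$, which is still an isometric embedding. For the range, $C(K_2)\simeq C_0(K_2)$ is an isomorphism with some fixed constant $D$, because $C_0(K_2)$ is a hyperplane isomorphic to the whole space, as noted before Theorem \ref{4:4}. Composing gives an embedding $C(K_n)\To C_0(K_2)$ with constant at most $DM$. Theorem \ref{estimateAnembedding2} then yields $DM\ge n/12$ for all $n$, again a contradiction.

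The main obstacle is Step 1. We must check carefully that the embedded copy is \emph{uniformly} (here isometrically) equivalent to $\JL_p(\cA_n)$, since the whole argument depends on the constants not growing with $n$. This is exactly why we use a retraction rather than a general extension operator. We must also check that the finite stem below $\tau$ does not affect the isometry, and for that we rely on Remark \ref{rem1}.
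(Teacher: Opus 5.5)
Your proof is correct and follows the paper's argument. You embed $\JL_p(\cA_n)$ isometrically into $\JL_p(\cA_\Upsilon)$ through an $n$-branching subtree above a node of level $n-1$, compose with the putative embedding, and let $n\to\infty$ against Theorem~\ref{estimateAnembedding}, or for $p=\infty$ against Theorem~\ref{estimateAnembedding2} combined with $C_0(K_2)\simeq C(K_2)$. Using the retraction $f\mapsto f\circ r$ is a small improvement when $p=\infty$: it embeds all of $C(K_n)$, as Theorem~\ref{estimateAnembedding2} requires, whereas the paper's ``functions supported on the subtree'' directly give only those vanishing at $\infty$.
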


\begin{proof}
	Consider $p<\infty$. The functions that are supported on an everywhere exactly $n$-branching subtree of $\Upsilon$ provide an isometric copy of  $\JL_p({\mathcal{A}_n})$ inside $\JL_p({\mathcal{A}_\Upsilon})$. So, if  $\JL_p({\mathcal{A}_\Upsilon})$ was isomorphic to a subspace of $\JL_p({\mathcal{A}_2})$, then there would be a universal constant for isomorphisms of all $\JL_p({\mathcal{A}_n})$ inside $\JL_p({\mathcal{A}_2})$, which contradicts Theorem~\ref{estimateAnembedding}, because
	\[\lim_n  \frac{ n^{1-1/p}}{4(m+1)} = +\infty.\]

The argument for $p=\infty$ is similar: we refer to Theorem \ref{estimateAnembedding2}
and the fact  that the hyperplane $C_0(K_2)$ is isomorphic to $C(K_2)$.
\end{proof}


Theorem~\ref{Bairetreeembedding} below is due to Marciszewski and Pol \cite{MP09}, 
but we sketch an alternative  argument similar to that of Theorem~\ref{estimateAnembedding}.

\begin{theorem}\label{Bairetreeembedding}
	If $\Upsilon$ is a finitely branching tree, then $\JL_p(\cA_{\omega^{<\omega}})$ is not isomorphic to a subspace of $\JL_p(\cA_{\Upsilon})$.
\end{theorem}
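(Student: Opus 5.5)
The plan is to follow the scheme of the proof of Theorem~\ref{estimateAnembedding}, with the $n$-adic tree replaced by the full Baire tree. There every node has infinitely many successors, so the estimate of Part~3 becomes unbounded and no embedding can exist. Suppose, towards a contradiction, that $U:\JL_p(\cA_{\omega^{<\omega}})\To \JL_p(\cA_\Upsilon)$ is an isomorphic embedding with $\|U^{-1}\|=1$, and write $M=\|U\|$. As before, $e_x$ denotes the characteristic function of $B(x)\cup\{x\}$ for $x\in\omega^\omega$, and $f_y$ the corresponding function for $y\in[\Upsilon]$. For $p=\infty$ we first pass to the hyperplane $C_0(K_\Upsilon)$, as in Theorem~\ref{estimateAnembedding2}, so that images vanish at $\infty$.

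\textbf{Part 1.} For each $x\in\omega^\omega$ we approximate $Ue_x$ within $\eps$ by a rational step function $h_x=\sum_{y\in F_x}\vf_x(y)f_y+\xi_x$, exactly as in Part~1 of Theorem~\ref{estimateAnembedding}. By the Baire category theorem in $\omega^\omega$, we freeze the countably many parameters on a set $X$ that is dense in some $[\tau]$ and uncountable.

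\textbf{Part 2.} Using the $\Delta$-system lemma and Lemma~\ref{additional} (for $p=\infty$, directly from $\|U(e_x-e_{x'})\|\ge1$), we obtain a coefficient $c=\vf(i)$ with $|c|>1/2-\eps$. We may assume $c>0$. After shrinking, there is $\sigma_0\in\Upsilon$ such that for every $x\in X$ there is a unique $y_x\in[\sigma_0]$ with $\|(Ue_x)\chi_{[\sigma_0]}-cf_{y_x}\|_\infty<\eps$.

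\textbf{Part 3.} This is where finite branching of $\Upsilon$ enters, and I expect it to be the main obstacle. Fix a large $N$ and choose $x_j\in[\tau\sfrown j]\cap X$ for $j<N$; density of $X$ in $[\tau]$ makes this possible for every $N$. Let $y_j=y_{x_j}$. Let $b$ be the number of successors of $\sigma_0$, or better a bound on branching along the relevant part of $\Upsilon$.

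The counting argument needs care, since $\Upsilon$ need not be uniformly bounded-branching. To handle this, we first use König's lemma and compactness of $[\sigma_0]$ to pass to a subsequence of the $y_j$ converging to some $y^*$. We then run the argument of Part~3 along $y^*$: at each node $\sigma\prec y^*$ only finitely many successors occur.

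Alternatively, we take the maximal $\sigma\succeq\sigma_0$ with $|N(\sigma)|<N/(b_\sigma+1)$, where $b_\sigma=|\suc_\Upsilon(\sigma)|$. The earlier dichotomy on the values $U\rho(\sigma\sfrown i)$, with $\rho=\chi_{\{t:t\preceq\tau\}}$, then gives a set $J$ with $|J|\ge N/(b_\sigma+1)$ and $|Uh(\sigma\sfrown i)|\ge (c/2-\eps)|J|$ for $h=\sum_{j\in J}(e_{x_j}-\rho)$.

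The key observation is that $\|h\|_\infty=1$ and $\|h\|'_p\le |J|^{1/p}$. This yields $M\ge (c/2-\eps)|J|^{1-1/p}$, and $M\ge (c/2-\eps)|J|$ when $p=\infty$. To get a contradiction we need $|J|\to\infty$ as $N\to\infty$. The difficulty is that $b_\sigma$ depends on $N$.

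To overcome it, I would first apply Ramsey/pigeonhole along the tree: since $\Upsilon$ is finitely branching, any infinite set of $y_j$'s has a splitting pattern which, after thinning, is such that at each splitting node only boundedly many children are hit. Concretely, we pick an infinite $J_0\sub\omega$ such that the $y_j$, $j\in J_0$, converge to $y^*$ and split off $y^*$ at distinct nodes $\sigma_j\prec y^*$. Then we take $h=\sum_{j\in J}(e_{x_j}-\rho)$ and evaluate $Uh$ at a node of $y^*$ deep enough. Every $Ue_{x_j}$ with $j\in J$ splitting later is $\approx c$ there, while $U\rho$ is a fixed function. If $|U\rho|$ is small there, we get $|Uh|\gtrsim c|J|$. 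If it is large, we use instead the $j$'s that have already split off, whose $Ue_{x_j}$ are $\approx0$ at that node. Either way, $\|U\|\ge (c/2-\eps)\lfloor|J|/2\rfloor^{1-1/p}$ for arbitrary finite $J$, contradicting boundedness of $U$.
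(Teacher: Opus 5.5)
Your proof is correct and follows essentially the paper's route: Parts~1--2 carried over from Theorem~\ref{estimateAnembedding}, compactness of $[\Upsilon]$ (from finite branching) to make the $y_{x_j}$ converge to a branch $y^*$, and the test vectors $h=\sum_{j\in J}(e_{x_j}-\rho)$ evaluated at a node of $y^*$. The only real difference is in how $U\rho$ is controlled along $y^*$: the paper uses the weak convergence $e_{x_n}\to\rho$ to get $U\rho\ge c-\eps$ there, so it needs only the indices that have already split off and gets a better constant, whereas your dichotomy on $|U\rho(\sigma)|$ at a node separating early from late indices avoids that observation; note that, like the paper, you tacitly assume infinitely many of the $y_{x_j}$ differ from $y^*$.
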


\begin{proof}
Let us briefly discuss the case $p<\infty$.
We can repeat the first two parts of the argument used in \ref{estimateAnembedding}
but we need a different argument for Part 3.

We suppose that $U: \JL_p(\cA_{\omega^{<\omega}}) \To\ JL_p(\cA_{\Upsilon})$
is a norm-increasing  isomorphic embedding and arrive at  a contradiction by showing that
$\|U\|=+\infty$.

It is now enough to estimate $\|U\|$ assuming that there is $\sigma_0\in \Upsilon$ such that
for every $x\in X$  there is a unique element  $y_x\in [\sigma_0]$ such that 
$\|\big(Ue_x\big)\cdot \chi_{[\sigma_0]}-cf_{y_x}\|_\infty<\eps$.
Recall that we can assume that  $c-\eps>0$.

We now pick $x_n\in X\cap [\tau\sfrown n]$ for every $n$. 
Write $\rho$ for the characteristic function of the set $R=\{t\in \omega^{<\omega}: t\preceq \tau\}$.
Note that the sequence of $e_{x_n}$ converges weakly in $\JL_p(\cA_{\omega^{<\omega}})$ to $\rho$.
Moreover, we can assume that the corresponding branches $y_n$ converge to $y\in [\Upsilon]$
in the natural topology of $[\Upsilon]$. Since $Ue_{x_n}$ converge weakly in $\JL_p(\Upsilon)$ we
get the following.
\medskip

\noindent {\sc Claim.} $U\rho(\sigma)\ge c-\eps$ for every $\sigma\in \Upsilon$ satisfying
$\sigma_0\preceq \sigma\prec y$.
\medskip

Consider $\sigma$ with $\sigma_0\prec\sigma\prec y$ and suppose that
\[ 	N(\sigma)=\{n: \sigma\not\prec y_{x_n}\},\]
has  $m$ elements. Then $h=\sum{j\in N(\sigma)}(e_{x_n}-\rho)$
is an element of $\JL_p(\omega*{<\omega})$ with $\|h\|_p\le m^{1/p}$ and
$Uh(\sigma)\ge m(c-\eps)$. Hence,
$\|U\|\ge m^{1-1/p}(c-\eps)$. Since we can take $\sigma$ such that $m$ is arbitrary large,
it follows that $\|U\|=+\infty$.
\end{proof}

\section{Small $\JL$-spaces}\label{6}

The main objective of this section is to show that a weakening of Martin's axiom allows us to show any two small AD families have isomorphic Johnson-Lindenstrauss spaces. 
To do so we introduce an operation between two almost disjoint families of the same cardinality, $\mathcal{A}$ and $\cA'$ of subsets of
 two disjoint countable sets $N$ and $N'$, respectively. Suppose that $\theta:\cA\to\cA'$ is a fixed bijection.
We  then write $A_\theta =A\cup\theta(A)$ and consider
$$ \cA\oplus_\theta\cA'=\{A\cup\theta(A):A\in\cA\},$$
which is an almost disjoint family on $N\cup N'$. 

Note that the Alexandrov-Urysohn
space $K_{\cA\oplus_\theta\cA'}$ may be seen as the image of the related quotient map
on $K_\cA\cup K_{\cA'}$ gluing every $A\in\cA$ with $\theta(A)\in\cA'$, and identifying
the corresponding points at $\infty$. Hence, we can think that $K_\cA$ and $K_{\cA'}$ are subspaces
of $K_{\cA\oplus_\theta\cA'}$. The operator that corresponds to the restriction to $K_{\cA'}$ is
$$ R':  \JL_p(\cA\oplus_\theta \cA')\to \JL_p(\cA'),$$
 given by $R'f(\theta(A))=  f(A\cup\theta(A))$ for $A\in\cA$, $R'f(n)=f(n)$ for $n\in N'$, and $R'f(\infty)=f(\infty)$). 
A bounded operator
\[ E':\JL_p(\cA')\to \JL_p(\cA\oplus_\theta\cA'),\]
is an extension operator if $R'\circ E'$ is the identity. In a similar way, we have the restriction $R:  \JL_p(\cA\oplus_\theta \cA')\to \JL_p(\cA)$ and extension operators $E:\JL_p(\cA)\to \JL_p(\cA\oplus_\theta\cA')$.

\begin{lemma}\label{5:1}
Consider an almost disjoint family $\cA\oplus_\theta\cA'$ as above and any $p\in (1,\infty]$. Suppose
that there are extension operators
\[  \JL_p(\cA')\to \JL_p(\cA\oplus_\theta\cA') \mbox{ and } \JL_p(\cA)\to \JL_p(\cA\oplus_\theta\cA').\]
 Then $\JL_p(\cA)\simeq \JL_p(\cA')$.
 \end{lemma}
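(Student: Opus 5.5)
The idea is to realize $\JL_p(\cA)$ and $\JL_p(\cA')$ as complements of one common space inside $Z=\JL_p(\cA\oplus_\theta\cA')$, and then compare the two complements.

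Write $Z=\JL_p(\cA\oplus_\theta\cA')$ and let $E:\JL_p(\cA)\to Z$, $E':\JL_p(\cA')\to Z$ be the given extension operators, with restrictions $R,R'$. Then $P=ER$ and $P'=E'R'$ are bounded projections on $Z$. Hence
\[ Z\simeq \ker R\oplus \JL_p(\cA)\simeq \ker R'\oplus \JL_p(\cA').\]
So it suffices to identify $\ker R$ and $\ker R'$ and check that they are isomorphic in a compatible way.

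First I identify $\ker R$. A function $f\in Z$ with $Rf=0$ vanishes on $N$, on every glued point $A\cup\theta(A)$ and at $\infty$. Its only possibly nonzero values are therefore on $N'$. Continuity at the point $A\cup\theta(A)$ forces $f|_{\theta(A)}\to 0$, and continuity at $\infty$ forces $f\to0$ along sets almost disjoint from all of them. Hence $f|_{N'}\in c_0(N')$, and since $\norm{f}'_p=0$, the norm is the sup norm. Conversely, every $c_0(N')$ function extended by zero lies in $\ker R$. Thus $\ker R\simeq c_0$, and symmetrically $\ker R'\simeq c_0(N)\simeq c_0$.

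It follows that
\[ c_0\oplus\JL_p(\cA)\simeq Z\simeq c_0\oplus \JL_p(\cA').\]
Finally I invoke the observation recorded in Section~\ref{3} that $\JL_p(\cB)\simeq c_0\oplus\JL_p(\cB)$ for every AD family $\cB$, which gives $\JL_p(\cA)\simeq\JL_p(\cA')$.

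For $p=\infty$ one should check that the restriction at $\infty$ does not spoil the kernel computation. It does not: the kernel consists of functions vanishing at $\infty$ and on $K_\cA$, which is again $c_0(N')$.

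The main point needing care is the kernel identification, specifically that $R$ is onto a complemented copy and that $\ker R$ is exactly $c_0(N')$. Both follow from $RE=\mathrm{id}$ and the local base description of $\psi(\cA\oplus_\theta\cA')$. Beyond that the argument is formal, so I expect no serious obstacle.
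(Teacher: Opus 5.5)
Your proposal is correct and matches the paper's proof. The projections $ER$ and $E'R'$ split off kernels isometric to $c_0(N')$ and $c_0(N)$, and the observation $\JL_p(\cB)\simeq c_0\oplus\JL_p(\cB)$ from Section~\ref{3} absorbs that summand; your more explicit verification of the kernels is a harmless elaboration of the paper's one-line remark.
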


\begin{proof}
If $E':\JL_p(\cA')\to \JL_p(\cA\oplus_\theta\cA')$ is an extension operator then
$P'=E'\circ R' $ is a projection since $R' \circ E'$ is the identity.
Moreover, $c_0(N)$ is the kernel of $P'$ so 
\[\JL_p(\cA\oplus_\theta \cA')\simeq c_0(N) \oplus \JL_p(\cA')\simeq  \JL_p(\cA').\]
By symmetry, $\JL_p(\cA\oplus_\theta \cA')\simeq  \JL_p(\cA)$, and we are done.
\end{proof}

In our present setting, Lemma \ref{3:1}
can be formulated as follows.

\begin{lemma}\label{5:2}
Consider an almost disjoint family $\cA\oplus_\theta\cA'$ as above and suppose
that there is a mapping $N\ni n\mapsto \nu_n$ such that

\begin{enumerate}[(i)]
\item every $\nu_n$ is a measure concentrated on $N'$ and $\nu_n(N')=1$;
\item $\sup_n\|\nu_n\|<\infty$;
\item $\nu_n(F)=0$ for every finite $F\sub N'$ and almost all $n\in N$;
\item $ \big(\delta_n-\nu_n\big)(A\cup\theta(A))=0$ for every $A\in\cA$ and almost all $n\in N$.
\end{enumerate}
Then there is an extension operator 
$$ E:\JL_p(\cA')\to \JL_p(\cA\oplus_\theta\cA'),$$
such that  $\norm{E}\leq \sup_n\|\nu_n\|$.
\end{lemma}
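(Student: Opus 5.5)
The plan is to write down the extension operator explicitly and check the required properties one by one. View $K_{\cA'}$ as a subspace of $L=K_{\cA\oplus_\theta\cA'}$, where $\theta(A)$ is identified with $A_\theta$, and note that $L=K_{\cA'}\cup N$ with $N$ a countable set of isolated points. For $g\in \JL_p(\cA')$ define
\[ Eg(t)=\begin{cases} g(t) & t\in N',\\ g(\theta(A)) & t=A_\theta,\ A\in\cA,\\ g(\infty) & t=\infty,\\ \int_{N'} g\,{\rm d}\nu_n & t=n\in N.\end{cases}\]
The integral makes sense because $\nu_n$ is concentrated on $N'$ and $g$ is bounded; if $\nu_n$ is not finitely supported, it is a convergent series since $\|\nu_n\|<\infty$. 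By construction $R'\circ E=\mathrm{id}$ and $E$ is linear. Since $\nu_n$ lives on $N'$, $E$ does not change the values at the points $A_\theta$, so $\norm{Eg}'_p=\norm{g}'_p$. Also $|Eg(n)|\le \|\nu_n\|\,\norm{g}_\infty$. Together these give $\norm{Eg}_p\le \max(1,\sup_n\|\nu_n\|)\norm{g}_p$, and by (i) $\sup_n\|\nu_n\|\ge 1$, so $\norm{E}\le\sup_n\|\nu_n\|$.

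The main obstacle is showing that $Eg$ is continuous on $L$. Continuity is automatic at isolated points, so only the points $A_\theta$ and $\infty$ need checking, and there we must control $Eg(n)$ for $n\in N$ near these points. I would reduce this to simple functions. Functions $g$ that are finite linear combinations of characteristic functions of finite subsets of $N'$, of sets $\{\theta(A)\}\cup\theta(A)$, and the constant $1$ are dense in $C(K_{\cA'})$ in the sup norm. The map $g\mapsto Eg|_{L}$, regarded as taking values in bounded functions on $L$, is sup-norm bounded by $\sup\|\nu_n\|$. A uniform limit of continuous functions is continuous. Hence it suffices to prove that $Eg$ is continuous for these generators $g$, in which case $Eg$ should coincide with a clopen indicator off finitely many points of $N$:
\begin{itemize}
\item For $g=1$, (i) gives $Eg\equiv1$.
\item For $g=\chi_F$ with $F\sub N'$ finite, (iii) gives $Eg(n)=0$ for almost all $n$. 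So $Eg$ is $\chi_F$ up to finitely many isolated points, and this is continuous.
\item For $g=\chi_{\{\theta(A)\}\cup\theta(A)}$, we have $Eg(n)=\nu_n(\theta(A))=\nu_n(A\cup\theta(A))$ because $\nu_n$ is concentrated on $N'$. By (iv) this equals $\delta_n(A_\theta)=\chi_A(n)$ for almost all $n$. Thus $Eg$ agrees with the characteristic function of the clopen set $\{A_\theta\}\cup A\cup\theta(A)$ of $L$ except at finitely many points of $N$, and is continuous.
\end{itemize}

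One technical point in the reduction is to make sure the sup-norm density argument is applied to $C(K_{\cA'})$, not to $\JL_p(\cA')$. Continuity is a property of the function alone, and every $g\in\JL_p(\cA')$ lies in $C(K_{\cA'})$, so this is consistent. Alternatively, one could invoke Lemma \ref{3:1}(b) with measures $\mu_n=\nu_n$ on $K_{\cA'}$. For that I would check $\lim_n(\nu_n-\delta_n)(C)=0$ for every clopen $C\sub L$, using the fact that the Boolean algebra of clopen sets of $L$ is generated by points, the sets $\{A_\theta\}\cup A\cup\theta(A)$, and $L$ itself. This is exactly conditions (i), (iii) and (iv), plus the finitely many points of $N$ handled trivially. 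Either way, the norm bound comes from the explicit formula above.
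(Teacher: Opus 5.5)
Your proposal is correct and takes the same route as the paper: the same explicit formula for $E$, the same reduction of continuity to characteristic functions of clopen sets by uniform approximation, and the same use of (i), (iii) and (iv) to show that $\nu_n$ and $\delta_n$ agree on the generating clopen sets for almost all $n$. You also write out the norm estimate, including the observation that $\sup_n\|\nu_n\|\ge 1$ by (i), which the paper leaves implicit.
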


\begin{proof}
We  define $E$
by setting,  for every $g\in \JL_p(\cA')$,   $Eg(A\cup\theta(A))=g(\theta(A))$ for $A\in \cA$. $Eg(n') = g(n')$ for $n'\in N'$, and 
\[ Eg(n)= \int_{N'} g\;{\rm d}\nu_n,  \mbox{ for } n\in N.\]

The only subtle point is that $Eg$ is  continuous on $K_{\cA\oplus_\theta\cA'}$. 
As every $g\in \JL_p(\cA')$ can be uniformly approximated by simple continuous functions,
it is enough to consider $g=\chi_C$ for some clopen set $C$.
Any point of the form $A\cup\theta(A)$ in this space is a common limit of the sequences $A\subset N$ and $\theta(A)\subset N'$. Property (iv) implies that each clopen set $C_A = A\cup \theta(A)\cup\{A\cup\theta(A)\}$ satisfies that 
\[\nu_n(C_A) = \delta_n(C_A) = \begin{cases} 1 & \text{ if } n\in A \\ 0 & \text{ otherwise,} \end{cases}\]
for all but finitely many $n\in N$. Moreover, by property (iii), for any given finite set $F\subset N\cup N'$, $\nu_n(F) = \delta_n(F)=0$ for all but finitely many $n\in N$. By property (i), $\nu_n(K_{\cA\oplus_\theta\cA'})=1 = \delta_n(K_{\cA\oplus_\theta\cA'})$. Every clopen set $C$ (or its complement) is a finite modification of a finite union of sets $C_A$, so  $\delta_n(C)=\nu_n(C)$ for almost all $n$ whenever $C$ is clopen. This implies the continuity of $E\chi_C$ because whenever $\lim_k n_k = \xi$ is a nontrivial limit we will have
\[
\lim_k E\chi_C(n_k) = \lim_k \nu_{n_k}(C) = \lim_k \delta_{n_k}(C) = \delta_{\xi}(C) = \chi_C(\xi) = E \chi_C (\xi).\]
\end{proof}

We are ready for the main result of this section, which requires a weak version of Martin's Axiom. 
For the convenience of the reader we recall the relevant terminology; see Peng \cite{Peng}
for a short survey on versions of Martin's axiom and further references. Let $(\mathbb P,\leq)$ be a partially ordered set, and fix a set $D\subseteq \mathbb P$. 
The set $D$ is called  \textit{open} if for any $p\in D$ and $q\leq p$ we have $q\in D$. The set $D$ is called \textit{dense} if for any $p\in \mathbb P$ there is $q\in D$ such that $q\leq p$. 
A set $L\sub {\mathbb P}$ is called linked if for any $p_1,p_2 \in L$ there is $q\in \mathbb P$ such that $q\leq p_1, p_2$. A filter is a subset $G\subseteq \mathbb{P}$ such that if $p\in G$ and $p\leq q$ then $q\in G$ and if $p,q\in G$ there is $r\leq p,q$ such that $r\in G$.
 Finally, $\mathbb P$ is called $\sigma$-linked if 
 it is a countable union of linked parts. 
 For a cardinal $\kappa<\mathfrak c$,  the $\sigma$-linked version of Martin's axiom for $\kappa$ reads as follows.

\medskip

MA$_\kappa(\sigma \text{-linked})$: {\em For any $\sigma$-linked poset $\mathbb P$, 
and any collection $\{D_\alpha:\alpha<\kappa\}$ of dense open subsets of $\mathbb P$ there is a 
filter $G\subseteq \mathbb P$ such that  $D_\alpha\cap G\neq \emptyset$
for every  $\alpha<\kappa$.}

\medskip   


The following is the crucial application of the axiom; recall that we still consider
two almost disjoint families $\cA$ and $\cA'$ on two disjoint countable sets $N$ and $N'$, respectively.

\begin{lemma}[MA$_\kappa(\sigma \text{-linked})$]\label{5:3}
If $|\cA|=|\cA'|=\kappa$ and $\theta:\cA\to\cA'$ is any bijection then for every  $p\in (1,\infty]$ 
 there is an extension operator $E: \JL_p(\mathcal A')\to \JL_p({\mathcal A\oplus_\theta \mathcal A'})$, such that $\norm{E}\leq 3$.
\end{lemma}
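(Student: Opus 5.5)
By Lemma \ref{5:2}, it suffices to produce, under MA$_\kappa(\sigma\text{-linked})$, a family of measures $(\nu_n)_{n\in N}$ on $N'$ satisfying conditions (i)--(iv) with $\sup_n\|\nu_n\|\le 3$. I will aim for measures of a very simple shape. Enumerate $N=\{n_0,n_1,\dots\}$. For each $n\in N$, I seek either $\nu_n=\delta_{a}$ with $a\in N'$, or $\nu_n=\delta_{a}+\delta_{b}-\delta_{c}$ with $a,b,c\in N'$, which has norm at most $3$ and total mass $1$. The point map $n\mapsto a_n$ is the natural choice. Condition (iv) then says: for every $A\in\cA$ and almost all $n$, we need $n\in A$ iff $a_n\in\theta(A)$ (in the pure-Dirac case). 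Condition (iii) says $a_n\to\infty$, i.e.\ the assignment is finite-to-one. In general a point map may fail to exist (e.g.\ $N$ may have many points outside $\bigcup\cA$ while $N'\sm\bigcup\cA'$ is empty modulo finite), so the signed correction $\delta_b-\delta_c$ is reserved for handling such points: it lets $\nu_n$ have mass $1$ while $\nu_n(\theta(A))=0$ for all relevant $A$.

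The forcing is defined as follows. Conditions are pairs $p=(s_p,F_p)$, where $s_p$ is a finite partial function from $N$ assigning to each $n\in\mathrm{dom}(s_p)$ a finite signed combination $\nu_n^p$ of the allowed shape on $N'$, and $F_p\in[\cA]^{<\omega}$ is a finite set of promises. The order is $q\le p$ iff $s_q\supseteq s_p$, $F_q\supseteq F_p$, and for every new $n\in\mathrm{dom}(s_q)\sm\mathrm{dom}(s_p)$ and every $A\in F_p$ we have $\nu^q_n(\theta(A))=\delta_n(A)$. Any two conditions with the same $s$ are compatible: take the union of the promises and the same $s$. Since there are only countably many $s$, the poset is $\sigma$-linked. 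The dense sets are:
\begin{enumerate}[(a)]
\item $D_A=\{p: A\in F_p\}$ for $A\in\cA$, which is trivially dense; there are $\kappa$ of these;
\item $E_k=\{p: n_k\in\mathrm{dom}(s_p)\}$;
\item sets ensuring finite-to-oneness, namely that only finitely many $n$ put weight on a fixed $a\in N'$. This can be built into the definition of $\le$ by requiring new values to avoid a finite set $G_p\subseteq N'$ that is recorded in the condition, while keeping $\sigma$-linkedness since $G_p$ is countably determined.
\end{enumerate}
A generic filter meeting all of these yields $\nu_n$ satisfying (i)--(iv). Condition (iv) holds for each $A$ from the moment $A$ enters the promises, so for all but the finitely many $n$ already decided at that stage.

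The main obstacle is proving density of $E_k$ (and of the sets in (c)). Given $p$ and $n=n_k\notin\mathrm{dom}(s_p)$, with $F_p=\{A_1,\dots,A_r\}$, I must find $\nu_n$ on $N'\sm G_p$ with $\nu_n(\theta(A_i))=\chi_{A_i}(n)$ for each $i$. Because the $A_i$ are almost disjoint, $n$ lies in at most one of them after discarding a finite set of $n$'s; those finitely many exceptional $n$ can be handled by shrinking, or by noting that (iv) only needs to hold for almost all $n$, so such $n$ may be assigned without respecting promises at the moment they are listed. If $n\in A_i$, I choose $a\in\theta(A_i)$ outside $G_p$ and outside the other $\theta(A_j)$; this is possible since $\theta(A_i)$ is infinite and almost disjoint from the others. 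If $n$ lies in no $A_i$, I need $\nu_n$ of mass $1$ vanishing on every $\theta(A_i)$. If $N'\sm\bigcup_i\theta(A_i)$ is infinite, a single point works. Otherwise I use $\delta_a+\delta_b-\delta_c$ with $a\in\theta(A_1)$ and $b,c\in\theta(A_2)$ chosen to cancel appropriately. Matching the masses here is the delicate step, and it is where the constant $3$ comes from: the correction allows mass $1$ while every promised set $\theta(A_i)$ receives the value $0$ or $1$ as required. To make this rigorous, I would allow any combination $\delta_a+\delta_b-\delta_c$ with $a$ and $c$ in the same $\theta(A_i)$, so that they cancel there, and $b$ placed so that it fixes the total. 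With the extension operator in hand, Lemma \ref{5:2} gives $\|E\|\le 3$.
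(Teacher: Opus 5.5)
Your reduction to Lemma~\ref{5:2} and your choice of measures $\delta_a$ and $\delta_a+\delta_b-\delta_c$ match the paper. The forcing you define, however, does not work, because the sets $E_k$ are not dense. Your order forces every newly decided $n$ to satisfy $\nu_n(\theta(A))=\delta_n(A)$ for \emph{every} $A\in F_p$, and nothing controls how the promised sets meet at undecided points.

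Take $p$ with $F_p=\{A_1,A_2,A_3\}$ and $n_k\in A_1\cap A_2\cap A_3$; for instance, $\cA$ the branches of $2^{<\omega}$ and $n_k$ the root. Suppose also that $\theta(A_1),\theta(A_2),\theta(A_3)$ are pairwise disjoint, or disjoint off $G_p$ once you put their finite pairwise intersections into $G_p$. Any $q\le p$ deciding $n_k$ needs a measure $\nu$ with $\nu(\theta(A_i))=1$ for $i=1,2,3$ and $\nu(N')=1$. Then the positive part of $\nu$ is at least $3$ and the negative part at least $2$, so $\|\nu\|\ge 5$. With $r$ promised sets the bound becomes $2r-1$, so no uniform bound is possible, let alone $3$.

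Your remedy of assigning such $n$ ``without respecting promises'' contradicts your definition of $\le$. If you instead change $\le$ to exempt points lying in two promised sets, (iv) is no longer guaranteed. A fixed $A$ meets infinitely many sets promised later, and the union of these finite intersections can be all of $A$; for tree branches, every node of a branch lies on other branches. In short, the triviality of your linkedness (same $s$, union of promises) is paid for with the failure of density.

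The missing idea is the paper's clause (iv) in the definition of conditions. The decided part is an initial segment $I_{n_p}$, and distinct promised sets satisfy $A\cap B\subseteq I_{n_p}$ and $\theta(A)\cap\theta(B)\subseteq J_{n_p}$. Then each undecided $j$ lies in at most one promised set, so density of $D_n$ and $D_A$ can be obtained with single Dirac measures.

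The work then moves to linkedness. For $p,q$ with the same stem $(n,\nu)$, the union $\cF_p\cup\cF_q$ may violate that clause. So one first extends the stem to some $k$ absorbing all cross intersections. A point $j\in I_k\sm I_n$ can lie in one $A\in\cF_p$ and one $B\in\cF_q$, but never in two sets from the same side. For such $j$ one sets $\widetilde{\nu}(j)=\delta_{i_A}+\delta_{i_B}-\delta_h$, and this is where the constant $3$ comes from.

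The case you flag as delicate, $n$ in no promised set while $N'\sm\bigcup_i\theta(A_i)$ is finite, never occurs. Any member of $\cA'$ other than the $\theta(A_i)$ is almost disjoint from all of them, so that complement is infinite and a single Dirac measure suffices there.
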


\begin{proof}
To prove this,  we can assume that $N=\omega$ and
fix a bijection $\varphi:\omega \to N'$.
For every $n\in\omega$ write $I_n=\{k:k<n\}$ and $J_n=\{\vf(k): k<n\}$.
Define $M$ as the set of measures $\mu$ on $N'$ such that either $\mu=\delta_i$ for some
$i\in N'$ or $\mu=\delta_i-\delta_j+\delta_k$ for different $i,j,k\in N'$.
Thus for every $\mu\in M$ we have $\mu(N')=1$ and $\|\mu\|\le 3$.

Consider the partial order $\mathbb P$ consisting of all conditions $p=(\nu_p,n_p,\cF_p)$ such that

    \begin{enumerate}[(i)]
        \item $n_p\in \omega$;
        \item $\cF_p\in [\mathcal A]^{<\omega}$;
        \item $\nu_p$ is a  function $I_{n_p}\to M$;
        \item $A\cap B\sub I_{n_p}$ and $\theta(A)\cap \theta(B)\sub J_{n_p}$ whenever $A,B\in\cF_p$ are distinct.
\end{enumerate}
 
We declare that $q\leq p$ if $n_p\leq n_q$, $\cF_p\subseteq \cF_q$, $\nu_q$ is an extension of $\nu_p$, and 
\[ (*)\quad \nu_q(j)(\theta(A))=\delta_j(A)  \mbox{ and }  \nu_q(j)(\{m\})=0,\]
 whenever $n_p\le j<n_q$, $ A\in \cF_p$, and $m\in J_{n_p}$. 

We need to check that $\mathbb P$ is $\sigma$-linked.
For $n\in\omega$, and $\nu: I_n\to M$. Consider \[ L_{n,\nu}=\{p\in {\mathbb P}: n_p=n, \nu_p=\nu\}.\]

It is easy to see that $\mathbb P=\bigcup_{n,\nu}L_{n,\nu}$. There are only countably many different $n$ and $\nu$, so it is enough to show every $L_{n,\nu}$ is linked; that is, every $p,q\in L$ can be extended to some $r\in {\mathbb P}$.

\newcommand{\wtn}{\widetilde{\nu}}
To this end fix $n\in\omega$, $\nu: I_n\to M$ and $p,q\in L_{n,\nu}$.
 Find $k\ge n$ minimal with respect to the property that for every distinct $A,B\in \cF_p\cup \cF_q$, $A\cap B\sub I_k$ and $\theta(A)\cap \theta(B)\sub J_k$. Before continuing we can make the following observations:
 
For every $A\in\cF_p\cup \cF_q$ there exists $i_A\in \theta(A)\sm (\bigcup_{B\in (\cF_p\cup \cF_q)\setminus \{A\}} \theta(B)\cup J_k)$.
There is $h\in N'\setminus (\bigcup_{C\in\mathcal{F}_p\cup \mathcal{F}_q}\theta(C)\cup J_k)$. Finally, notice that for any $j\in I_k\sm I_n$, $\abs{\{A\in \cF_p\cup\cF_q:j\in A\}}\leq 2$. This is so because if there are $A_0,A_1,A_2\in \cF_p\cup\cF_q$ different such that $j\in  A_0 \cap A_1 \cap A_2$, then there are different $i,i'\in \{0,1,2\}$ such that $A_i,A_{i'}\in \mathcal F_p$ or  $A_i,A_{i'}\in \mathcal F_q$. But in either case $A_i\cap A_{i'}\subseteq I_{n}$, so that $j\in I_n$, which is a contradiction. 

We will define $\wtn:I_{k}\to M$ that extends $\nu$ and fulfills (*) by cases. Consider $j\in I_k\sm I_n$.

\begin{itemize}
\item If  $j$ belongs to no $A\in\cF_p\cup\cF_q$
then we define $\wtn(j)=\delta_h$.
\item If $j\in A$ for exactly one $A\in\cF_p\cup\cF_q$
then we define $\wtn(j)=\delta_{i_A}$.
\item If $j\in A\cap B$ where $A\in \cF_p$ and $B\in \cF_q$
then we define $\wtn(j)=\delta_{i_A}+\delta_{i_B}-\delta_h$. 
\end{itemize}

Then (*) holds so   $r=(k, \cF_p\cup\cF_q, \widetilde{\nu})$ satisfies $r\le p,q$. 

Now consider for every $n\in\omega$ and for any $A\in \mathcal A$, 
\[  D_n=\{p\in {\mathbb P}: n_p> n\} ,\quad  D_A=\{p\in{\mathbb P}: A\in \cF_p\}..\]
There are $\kappa$ such sets.
\medskip

\noindent \textsc{Claim.} For every $A\in \mathcal A$, $D_A$ is dense in $\mathbb P$.
\medskip

Fix $p\in \mathbb P$ and assume $A\notin \mathcal F_p$. Find $k\ge n_p$ minimal with respect to the property that for every $B\in \mathcal F_p$, $A\cap B\subseteq I_{k}$ and $\theta(A)\cap \theta(B)\subseteq J_k$. Define $\wtn:I_{k}\to M$ by cases as we did previously. If $r=(k,\mathcal F_p\cup \{A\},\wtn)$, then $r\le p$ and $r\in D_A$. This establishes the claim

The proof that for every $n\in \omega$, $D_n$ is dense is analogous.

Notice that any filter $G\sub\mathbb P$
meeting every $D_n$ defines a map $\nu:\omega\to M$. Consider a filter $G\sub\mathbb P$ meeting every $D_n$ and every $D_A$.
To complete the proof, we only need to show that 
the sequence of measures $(\nu_n)_{n\in \omega}$ such that $\nu_n=\nu(n)$ satisfies the conditions of Lemma \ref{5:2}.

 The first two conditions have already been checked. 
For the third condition.
If $F\sub N'$ is finite then there is $n\in \omega$ such that $F\sub J_n$. Find $p\in D_{n}\cap G$. Now consider $j>n_p$ there must be $q\in G$ such that $j\leq n_q$. As $G$ is a filter there is $r\leq p,q$, which implies $\nu(j)(\{m\})=\nu_r(j)(\{m\})=0$ for any $m\in F\subseteq J_n$ by (*). In an analogous way the fourth condition of the lemma will follow from (*) as well.
\end{proof}

 Lemma \ref{5:1} and Lemma \ref{5:3} are enough to show.

\begin{theorem}\label{last}
Under MA$_\kappa(\sigma \text{-linked})$,
if $\mathcal{A},\mathcal B$  are two almost disjoint families of size $\kappa<\mathfrak{c}$ 
then the spaces $\JL_p(\cA)$ and $\JL_p(\cB)$ are isomorphic for $p\in (1,\infty]$.
\end{theorem}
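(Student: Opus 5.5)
The plan is to reduce Theorem~\ref{last} directly to Lemma~\ref{5:1}, with the two extension operators it requires supplied by Lemma~\ref{5:3}. Since $\JL_p$-spaces depend only on the combinatorial structure of the family, we may first relabel the underlying countable sets. Concretely, we replace $\cB$ by an isomorphic copy $\cA'$ living on a countable set $N'$ disjoint from the set $N$ carrying $\cA$. Any bijection of underlying sets carrying $\cB$ onto $\cA'$ induces a homeomorphism of the $\psi$-spaces and of the compacta, and it preserves $\norm{\cdot}'_p$. Hence it induces an isometry $\JL_p(\cB)\simeq\JL_p(\cA')$. As $|\cA|=|\cA'|=\kappa$, we fix an arbitrary bijection $\theta:\cA\to\cA'$ and form the almost disjoint family $\cA\oplus_\theta\cA'$ on $N\cup N'$.

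Next we apply Lemma~\ref{5:3}, which uses MA$_\kappa(\sigma\text{-linked})$, to the pair $(\cA,\cA')$ with the bijection $\theta$. This gives an extension operator $E':\JL_p(\cA')\to\JL_p(\cA\oplus_\theta\cA')$. We then apply the same lemma with the roles of the two families exchanged, that is, to $(\cA',\cA)$ with the bijection $\theta^{-1}$. For this step we need to observe that $\cA'\oplus_{\theta^{-1}}\cA$ is literally the same family as $\cA\oplus_\theta\cA'$. Indeed, its members are the sets $\theta(A)\cup A$, and the identifications of branch points coincide. So the second application yields an extension operator $E:\JL_p(\cA)\to\JL_p(\cA\oplus_\theta\cA')$, where the restriction operator $R$ is the one described before Lemma~\ref{5:1}.

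With both extension operators in hand, Lemma~\ref{5:1} gives $\JL_p(\cA)\simeq\JL_p(\cA')\simeq\JL_p(\cB)$, which completes the argument. The only points that need care are the symmetry check in the second step and the requirement $\kappa<\mathfrak c$. The latter is implicit in the axiom as stated and is needed because Lemma~\ref{5:3} meets $\kappa$ many dense sets. I expect the whole difficulty to be concentrated in Lemma~\ref{5:3}, which is already available. What remains is bookkeeping, mainly making sure that the restriction maps $R$ and $R'$ match the ones used in Lemma~\ref{5:1} once the roles of $\cA$ and $\cA'$ are swapped.
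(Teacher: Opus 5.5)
Your proposal is correct and takes the same approach as the paper, which simply states that Lemma~\ref{5:1} and Lemma~\ref{5:3} suffice. Like the paper, you apply Lemma~\ref{5:3} once with $\theta$ and once with $\theta^{-1}$ to obtain both extension operators, and then invoke Lemma~\ref{5:1}. You also make explicit two points the paper leaves implicit: the relabeling of $\cB$ onto a ground set disjoint from that of $\cA$, and the identity $\cA'\oplus_{\theta^{-1}}\cA=\cA\oplus_\theta\cA'$.
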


\section{Some old results and some new open problems.}\label{sectionproblems}

The following are the most natural questions that remain open for us:

\begin{problem}
	Characterize when two trees $T, T'$ satisfy that $K_{\cA_T}$ and $K_{\cA_{T'}}$ are homeomorphic, when $K_{\cA_T}$ is a continuous image of $K_{\cA_{T'}}$, and when $\JL_p({\cA_T})$ is isomorphic to $\JL_p({\cA_{T'}})$.
\end{problem}

\begin{problem}
	Solve the previous problem for the particular case when $T=2^{<\omega}$, $T=\omega^{<\omega}$ or when $T$ is the tree in which every node of height $n$ has $n+1$ immediate successors.
\end{problem}

There is natural problem along this line of thought in which we could not make significant progress, that we would like to mention. The first named author and Todorcevic \cite{AT18} developed another method of constructing almost disjoint families from trees in a natural way
that give  rise to non-homeomorphic compacta. Do these spaces have also non-isomorphic spaces of continuous functions? In the rest of the section, we give a brief self-contained exposition of the relevant facts from \cite{AT18} needed for the precise formulation and context of our problem. We consider the following topological invariant.

\begin{definition}\label{2:8}
	The open degree $\odeg(K)$ of a compact space $K$ is defined by saying that
	$\odeg(K)\le n$
	if  there is a countable family $\cV$ of open subsets of $K$ such that
	whenever $x_0,\ldots, x_n$ are pairwise distinct points in $K$, there are $V_0,\ldots, V_n\in\cV$
	with $x_i\in V_i$ for every $i$ and $\bigcap_iV_i=\emptyset$. 
\end{definition}

A different characterization is that $\odeg(K)\leq n$ if and only there exist two continuous surjections $f:L\To K$ and $g:L\To M$ between compact space such that $M$ is metrizable and $|g^{-1}(x)|\leq n$ for all $x\in M$ \cite[Theorem 7.2]{AvilesKrupskiLSigma}. From this, it is clear that $\odeg(K_1)\leq\odeg(K_2)$ when there is a continuous surjection from $K_2$ onto $K_1$.

Note that $\odeg(K_T)\le 2$ for every subtree $T$ of $\omega^{<\omega}$. This is indicated by the family consisting of all singletons of isolated points of $K_T$, its complements, and all the sets
of the form $[\tau]\cup \{s\in \omega^{<\omega}:\tau\preceq s\}$ for $\tau\in T$. 
If $p_0,p_1,p_2\in K_T$ are distinct then for instance, $p_0,p_1$ are not $\infty$ and this pair can be separated.

It turns out that splitting the branches into pieces in a suitable way, one can obtain AD families that define compacta of higher degree.

Consider the $m$-adic tree $m^{<\omega}$ and for every $x\in m^\omega$ and every $i\in\{0,\ldots,m-1\}$ write
\[ A_x^i=\{\sigma\in m^{<\omega}: \sigma\sfrown i\prec  x\}.\]
These sets $A_x^0,A_x^1,\ldots,A_x^{m-1}$ form a partition of the branch below $x$. 

The following is a particular case of \cite[Theorem 33]{AT18} applied to $K_\infty(\mathfrak{Q})$ for the partition $\mathfrak{Q}=\{\{0\},\{1\},\ldots,\{m-1\}\}$. For the convenience of the reader, we include a proof that avoids the particular language of \cite{AT18}.

\begin{theorem}\label{2:9}
	Let $\cA^{[m]}$ be the almost disjoint family of all sets of the  form $A_x^i$ which are infinite, for $x\in m^\omega$, $i<m$.
	Then $\odeg (K_{\cA^{[m]}})=m+1$. In particular, $K_{\cA^{[m]}}$ is not a continuous image of $K_{\cA^{[n]}}$ if $n<m$, and none of these compact spaces is a continuous image of $K_{T}$ for any subtree $T$ of $\omega^{<\omega}$.
\end{theorem}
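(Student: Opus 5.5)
We have to prove two inequalities, $\odeg(K_{\cA^{[m]}})\le m+1$ and $\odeg(K_{\cA^{[m]}})> m$. The ``in particular'' clauses then follow from monotonicity of $\odeg$ under continuous surjections, recorded after Definition~\ref{2:8}. Indeed, $\odeg(K_{\cA^{[n]}})=n+1<m+1$ when $n<m$, and $\odeg(K_T)\le 2<m+1$ for $m\ge 2$.

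\emph{Upper bound.} Write $K=K_{\cA^{[m]}}$. We take $\cV$ to consist of the following countable family:
\begin{itemize}
\item all singletons of nodes of $m^{<\omega}$, together with their complements;
\item for each $\tau\in m^{<\omega}$ and $i<m$, the open set $W_\tau^i$ made of the nodes $\sigma\succeq\tau$ with $\sigma\sfrown i\in m^{<\omega}$ and the points $A_x^i$ with $\tau\prec x$.
\end{itemize}
Each $W_\tau^i$ is open because every set $A^i_x$ with $\tau\prec x$ is contained, up to finitely many nodes, in the set of nodes extending $\tau$.

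Given $m+2$ distinct points, at most one of them is $\infty$. If some point is a node $s$, we use $\{s\}$ for it and $K\sm\{s\}$ for all the others, and the intersection is empty. So we may assume we have at least $m+1$ points of the form $A^{i}_{x}$. If two of them carry the same label $i$, say $A^i_x$ and $A^i_{x'}$ with $x\neq x'$, we choose $\tau\prec x$ and $\tau'\prec x'$ incomparable. Then $W^i_\tau$ and $W^i_{\tau'}$ are disjoint: a node $\sigma$ in both would extend two incomparable nodes. Otherwise the labels are pairwise distinct, which forces exactly $m+1$ points with labels $0,\dots,m$; this is impossible, since labels lie in $\{0,\dots,m-1\}$. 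Hence only $m$ points can have distinct labels, and among $m+1$ points two labels coincide.

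I expect this step to be routine. The only care needed is the bookkeeping that the sets are open and that the case with $\infty$ among the points causes no trouble.

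\emph{Lower bound ($\odeg>m$).} This is the main obstacle. Fix a countable family $\cV$ of open sets. For each $x$, consider the $m+1$ points $\infty,A_x^0,\dots,A_x^{m-1}$; we want to show that no choice of sets from $\cV$ separates them. For every $x$ and $i$, and every $V\in\cV$ containing $A^i_x$, $V$ contains $A^i_x$ minus a finite set. By countability of $\cV$ and the Baire category theorem we find a node $\tau$ and a dense set of $x\in[\tau]$ with the following property: for every $V$ in a fixed finite-stage enumeration of $\cV$, whether $V\ni A^i_x$ and the corresponding finite exceptional level are both frozen.

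Now suppose $V_\infty\ni\infty$ and $V_i\ni A_x^i$ are chosen from $\cV$. An open $V_\infty\ni\infty$ contains all but finitely many nodes outside finitely many sets of $\cA^{[m]}$. We use density to pick another $x'$ in the frozen set, with $x'$ far out along $x$ and avoiding those finitely many exceptional sets. The aim is a single node $\sigma$ with $\sigma\sfrown i\prec x'$ for the right $i$, beyond all the finite exceptions, lying in every $V_i$ and in $V_\infty$. The delicate point is that $\sigma$ must lie simultaneously in $A^i_{x'}$ for one $i$ and ``near'' the other $A^j$'s. I would try to handle this by choosing $x'$ branching off $x$ at nodes that go successively in all directions $0,\dots,m-1$. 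This uses the tree structure, since every node has all $m$ successors. I would try to follow closely the argument of \cite[Theorem 33]{AT18}.
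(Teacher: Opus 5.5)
\textbf{Upper bound and corollaries.} Your upper bound is correct. It differs only cosmetically from the paper's: you pigeonhole on the labels $i<m$ using label-indexed sets $W^i_\tau$, while the paper pigeonholes on branches using one set $W_\tau$ that contains all $A^i_x$ with $x\in[\tau]$. The deduction of the ``in particular'' clauses from monotonicity of $\odeg$ is also fine for $m\ge 2$.

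\textbf{Lower bound: the setup.} Here there is a genuine gap. You never produce a node in $\bigcap_i V_i\cap V_\infty$, and the route you sketch does not get there.

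First, the argument has to be by contradiction. It is false that for each $x$ no choice from $\cV$ separates $\infty,A^0_x,\dots,A^{m-1}_x$: for one fixed $x$ the clopen sets $\{A^i_x\}\cup A^i_x$ are pairwise disjoint. So assume $\cV$ witnesses $\odeg\le m$, and proceed as follows.
\begin{enumerate}[(1)]
\item Restrict to the dense $G_\delta$ set $G$ of those $x$ that take every value $i<m$ infinitely often. Only for such $x$ are all the $A^i_x$ points of $K$, a point you do not address.
\item For each $x\in G$ choose a separating tuple $V^0_x,\dots,V^{m-1}_x,V^\infty_x\in\cV$ and a finite $F_x$ with $A^i_x\sm F_x\subseteq V^i_x$.
\item Since these parameters range over a countable set, Baire category gives fixed $V^0,\dots,V^{m-1},V^\infty$ and $F$ together with a set $X$, dense in some $[\tau]$, of branches $x$ having exactly these parameters.
\item Lengthen $\tau$ so that no node strictly above $\tau$ lies in $F$.
\end{enumerate}
Your ``freezing of a finite-stage enumeration'' should be replaced by exactly this.

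\textbf{Lower bound: the missing idea.} Once the $V^i$ no longer depend on $x\in X$, you do not need a single branch $x'$ and a node ``near'' all the $A^j_{x'}$. That cannot work, since $A^0_{x'},\dots,A^{m-1}_{x'}$ partition the branch of $x'$, so each of its nodes lies in exactly one of them. Branching ``successively'' at different nodes does not help either.

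Instead, take any node $\sigma\succ\tau$ and use density of $X$ in $[\tau]$ to pick $m$ different branches $x^0,\dots,x^{m-1}\in X$ with $\sigma\sfrown i\prec x^i$. Then $\sigma\in A^i_{x^i}\sm F\subseteq V^i$ for every $i$. Hence every node strictly above $\tau$ lies in $\bigcap_i V^i$, and therefore outside $V^\infty$.

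This contradicts $\infty\in V^\infty$. Indeed, $K\sm V^\infty$ is a compact subset of $\psi(\cA^{[m]})$, so it is covered by finitely many members of $\cA^{[m]}$ and finitely many nodes. On the other hand, there are $m^k$ nodes above $\tau$ on level $|\tau|+k$, while each member of $\cA^{[m]}$ meets each level at most once.
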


\begin{proof}
	The proof of $\odeg(K_{\cA^{[m]}})\le m+1$ is similar to the proof of $\odeg(K_T)\leq 2$ above. Denote $\cA^{[m]}=\mathcal A$. The family of open sets consists of all isolated points and their complements, plus all sets of the form $W_\tau = \{A^i_x : x\in [\tau]\}\cup\{s\in\omega^{<\omega} : \tau\preceq s\}$. 
	Take distinct points $p_0,\ldots,p_{m+1}\in K_{\cA}$. If one  point $p_k$ is isolated, take $V_k=\{p_k\}$ and the rest of open sets as the complement. Otherwise all points are either $\infty$ or of the form $A^i_x$. There must be two points of the form $A^i_x$ and $A^j_y$ with $x\neq y$. Take $\tau,\tau'$ such that $\tau\prec x$ and $\tau'\prec y$ and $[\tau]\cap [\tau']=\emptyset$, take $W_\tau$ and $W_{\tau'}$ as neighborhoods of those points and the rest as you wish.
	
	Let us check that $\odeg(K_\cA)>m$. Suppose that there is a countable family $\cV$ of open subsets of
	$K_\cA$ separating every ($m+1$)-tuple $p_0,p_1,\ldots p_m$ as in Definition \ref{2:8}.
Define \[G=\{x\in m^{\omega}:\forall i<m, \abs{n:x(n)=i}=\omega\}.\]

Notice that for any $i<m, k<\omega$, the set 
\[G_{i,k}=\{x\in m^{\omega}: \exists n>k \ {x(n)=i} \},\]
 is open and dense in $m^{\omega}$. Moreover, $G=\bigcap_{i<n,k<\omega}G_{i,k}$   so $G$ is a dense $G_\delta$-set. 
 Furthermore, for every $x\in G$ and $i<m$, $A^i_x$ is infinite.
    
	In particular, for every $x\in G$ there are $V_x^0, V_x^1,\ldots,V_x^{m-1}, V_x^\infty\in\cV$ such that $A_x^i\in V_x^i$ for $i<m$, $\infty \in V_x^\infty$ and $V_x^0\cap \cdots\cap V_x^{m-1}\cap V_x^\infty=\emptyset$.
	Then for every $x\in G$ there is a finite set $F_x$ such that $A_x^i\setminus F_x\sub V_x^i$ for $i<m$.
	Since $\cV$ is countable, by the Baire category theorem there is $X\sub G$ 
	and a finite set $F\sub m^{<\omega}$
	such that
	
	\begin{enumerate}[(i)]
		\item  $V_x^i=V^i\in\cV$ for $i<m$, $V_x^\infty =V^\infty\in\cV$, $F_x=F$ for every $x\in X$;
		\item $X$ is dense in $[\tau]$ for some $\tau\in m^{<\omega}$;
		\item the set $D_\tau=\{\sigma \in m^{<\omega}: \tau\prec \sigma\}$ is disjoint from $F$.
	\end{enumerate}
	
	If we take any $\sigma\in D_\tau$ there are $x^i\in X$ such that $\sigma\sfrown i\prec x^i$ for $i<m$. Then $\sigma\in A_x^i\setminus F \sub V^i_x=V^i$ for all $i<m$. 
	This means that $D_\tau\sub \bigcap_{i<m}V^i$. Therefore $D_\tau\cap V^\infty= \emptyset$ because $V^0\cap \cdots\cap V^{m-1}\cap V^\infty=\emptyset$ . This contradicts that $V^\infty$ is a neighborhood of
	$\infty$.
\end{proof}


\begin{problem}
	Is $C(K_{\cA^{[m]}})$ isomorphic to $C(K_{\cA^{[n]}})$ when $n\neq m$?
\end{problem}

\bibliographystyle{siam}
\bibliography{references}

@misc{KR25,
      title={Almost disjoint families and some automorphic and injective properties of $\ell_\infty/c_0$}, 
      author={Piotr Koszmider and Małgorzata Rojek},
      year={2025},
      eprint={2509.22376},
      archivePrefix={arXiv},
      primaryClass={math.FA},
      url={https://arxiv.org/abs/2509.22376}, 
}

@article {Peng,
    AUTHOR = {Peng, Yinhe},
     TITLE = {Distinguishing {M}artin's axiom from its restrictions},
   JOURNAL = {Adv. Math.},
  FJOURNAL = {Advances in Mathematics},
    VOLUME = {460},
      YEAR = {2025},
     PAGES = {Paper No. 110032, 51},
      ISSN = {0001-8708,1090-2082},
   MRCLASS = {03E50 (03E02 03E65)},
  MRNUMBER = {4831835},
MRREVIEWER = {C\'{e}sar\ Corral},
       DOI = {10.1016/j.aim.2024.110032},
       URL = {https://doi.org/10.1016/j.aim.2024.110032},
}

@article {AT18,
    AUTHOR = {Avil\'{e}s, Antonio and Todorcevic, Stevo},
     TITLE = {Compact spaces of the first {B}aire class that have open
              finite degree},
   JOURNAL = {J. Inst. Math. Jussieu},
  FJOURNAL = {Journal of the Institute of Mathematics of Jussieu. JIMJ.
              Journal de l'Institut de Math\'{e}matiques de Jussieu},
    VOLUME = {17},
      YEAR = {2018},
    NUMBER = {5},
     PAGES = {1173--1196},
      ISSN = {1474-7480},
   MRCLASS = {54D30 (03E15 05C05 05D10 26A21 54D55 54H05)},
  MRNUMBER = {3860391},
MRREVIEWER = {Klaas Pieter Hart},
       DOI = {10.1017/S1474748016000335},
       URL = {https://doi.org/10.1017/S1474748016000335},
}

@book {CSC,
    AUTHOR = {Cabello S\'{a}nchez, F\'{e}lix and Castillo, Jes\'{u}s M. F.},
     TITLE = {Homological methods in {B}anach space theory},
    SERIES = {Cambridge Studies in Advanced Mathematics},
    VOLUME = {203},
 PUBLISHER = {Cambridge University Press, Cambridge},
      YEAR = {2023},
     PAGES = {xi+547},
      ISBN = {978-1-108-47858-8},
   MRCLASS = {46-02 (46A16 46Mxx)},
  MRNUMBER = {4523233},
MRREVIEWER = {Sven-Ake Wegner},
}

@article {FKK13,
    AUTHOR = {Ferrer, Jes\'{u}s and Koszmider, Piotr and Kubi\'{s}, Wies\l aw},
     TITLE = {Almost disjoint families of countable sets and separable
              complementation properties},
   JOURNAL = {J. Math. Anal. Appl.},
  FJOURNAL = {Journal of Mathematical Analysis and Applications},
    VOLUME = {401},
      YEAR = {2013},
    NUMBER = {2},
     PAGES = {939--949},
      ISSN = {0022-247X},
   MRCLASS = {46B26 (46E15 54C35 54D80 54G12)},
  MRNUMBER = {3018039},
MRREVIEWER = {Ond\v{r}ej Kurka},
       DOI = {10.1016/j.jmaa.2013.01.008},
       URL = {https://doi.org/10.1016/j.jmaa.2013.01.008},
}

@article {KL21,
    AUTHOR = {Koszmider, Piotr and Laustsen, Niels Jakob},
     TITLE = {A {B}anach space induced by an almost disjoint family,
              admitting only few operators and decompositions},
   JOURNAL = {Adv. Math.},
  FJOURNAL = {Advances in Mathematics},
    VOLUME = {381},
      YEAR = {2021},
     PAGES = {Paper No. 107613, 39},
      ISSN = {0001-8708},
   MRCLASS = {47L10 (03E05 46B26 46E15 46H40 47B38 54D45 54D80)},
  MRNUMBER = {4206788},
MRREVIEWER = {Omid Zabeti},
       DOI = {10.1016/j.aim.2021.107613},
       URL = {https://doi.org/10.1016/j.aim.2021.107613},
}

@incollection {Hr14,
    AUTHOR = {Hru\v{s}\'{a}k, Michael},
     TITLE = {Almost disjoint families and topology},
 BOOKTITLE = {Recent progress in general topology. {III}},
     PAGES = {601--638},
 PUBLISHER = {Atlantis Press, Paris},
      YEAR = {2014},
   MRCLASS = {03-02 (03E05 03E35 54-02 54H05)},
  MRNUMBER = {3205494},
MRREVIEWER = {J\"{o}rg D. Brendle},
       DOI = {10.2991/978-94-6239-024-9\_14},
       URL = {https://doi.org/10.2991/978-94-6239-024-9_14},
}

@article {GHK23,
    AUTHOR = {Guzm\'{a}n, Osvaldo and Hru\v{s}\'{a}k, Michael and Koszmider, Piotr},
     TITLE = {Almost disjoint families and the geometry of nonseparable
              spheres},
   JOURNAL = {J. Funct. Anal.},
  FJOURNAL = {Journal of Functional Analysis},
    VOLUME = {285},
      YEAR = {2023},
    NUMBER = {11},
     PAGES = {Paper No. 110149, 49},
      ISSN = {0022-1236},
   MRCLASS = {46B20 (46B25 46B26)},
  MRNUMBER = {4642565},
MRREVIEWER = {Jacopo Somaglia},
       DOI = {10.1016/j.jfa.2023.110149},
       URL = {https://doi.org/10.1016/j.jfa.2023.110149},
}

@article {PS23,
    AUTHOR = {Plebanek, Grzegorz and Salguero Alarc\'{o}n, Alberto},
     TITLE = {The complemented subspace problem for {$C(K)$}-spaces: a
              counterexample},
   JOURNAL = {Adv. Math.},
  FJOURNAL = {Advances in Mathematics},
    VOLUME = {426},
      YEAR = {2023},
     PAGES = {Paper No. 109103, 20},
      ISSN = {0001-8708},
   MRCLASS = {46E15 (46B03 46B25 54G12)},
  MRNUMBER = {4592269},
MRREVIEWER = {Rigoberto Vera Mendoza},
       DOI = {10.1016/j.aim.2023.109103},
       URL = {https://doi.org/10.1016/j.aim.2023.109103},
}

@article{AvilesKrupskiLSigma,
 AUTHOR = {Avil{\'e}s, Antonio and Krupski, Miko{\l}aj},
 TITLE = {Some examples concerning ${L}{{\Sigma}}({{\leq}}{{\omega}})$ and metrizably fibered compacta},
JOURNAL={Israel J. Math.},
VOLUME = {271},
PAGES = {155–186},
 YEAR = {2025},
URL={https://doi.org/10.1007/s11856-025-2787-1},
}

@article {AMP20,
    AUTHOR = {Avil\'{e}s, Antonio and Marciszewski, Witold and Plebanek,
              Grzegorz},
     TITLE = {Twisted sums of {$c _0$} and {$C(K)$}-spaces: a solution to
              the {CCKY} problem},
   JOURNAL = {Adv. Math.},
  FJOURNAL = {Advances in Mathematics},
    VOLUME = {369},
      YEAR = {2020},
     PAGES = {107168, 31},
      ISSN = {0001-8708},
   MRCLASS = {46B25 (03E50 46B26 46E15 46M18 54D40)},
  MRNUMBER = {4091893},
MRREVIEWER = {Tanmoy Paul},
       DOI = {10.1016/j.aim.2020.107168},
       URL = {https://doi.org/10.1016/j.aim.2020.107168},
}

@article{MP18,
title = {Extension operators and twisted sums of $c_0$ and ${C}({K})$ spaces},
journal = {J.\ Funct.\ Anal.},
volume = {274},
number = {5},
pages = {1491-1529},
year = {2018},
issn = {0022-1236},
doi = {https://doi.org/10.1016/j.jfa.2017.09.004},
author = {Witold Marciszewski and Grzegorz Plebanek},
}

@article{MP09,
title = {On {B}anach spaces whose norm-open sets are ${F}_\sigma$-sets in the weak topology},
journal = {J.\ Math.\ Anal.\ Appl.},
volume = {350},
number = {2},
pages = {708-722},
year = {2009},
author = {Witold Marciszewski and Roman Pol},
}

@article {Ma89,
    AUTHOR = {Marciszewski, Witold},
     TITLE = {On a classification of pointwise compact sets of the first
              {B}aire class functions},
   JOURNAL = {Fund. Math.},
  FJOURNAL = {Polska Akademia Nauk. Fundamenta Mathematicae},
    VOLUME = {133},
      YEAR = {1989},
    NUMBER = {3},
     PAGES = {195--209},
      ISSN = {0016-2736},
   MRCLASS = {54C35 (54H05)},
  MRNUMBER = {1065902},
MRREVIEWER = {J. van Mill},
       DOI = {10.4064/fm-133-3-195-209},
       URL = {https://doi.org/10.4064/fm-133-3-195-209},
}

@article {MP12,
    AUTHOR = {Marciszewski, Witold and Pol, Roman},
     TITLE = {On {B}orel almost disjoint families},
   JOURNAL = {Monatsh. Math.},
  FJOURNAL = {Monatshefte f\"{u}r Mathematik},
    VOLUME = {168},
      YEAR = {2012},
    NUMBER = {3-4},
     PAGES = {545--562},
      ISSN = {0026-9255},
   MRCLASS = {54H05 (54C35)},
  MRNUMBER = {2993963},
MRREVIEWER = {Barbara Majcher-Iwanow},
       DOI = {10.1007/s00605-012-0433-6},
       URL = {https://doi.org/10.1007/s00605-012-0433-6},
}

@article {Po22,
    AUTHOR = {Poprawa, Wojciech},
     TITLE = {Przestrzenie zwarte generowane przez rodziny prawie roz{\l}\c{a}czne
     [Compact spaces generated by almost disjoint families]},
   
   NOTE = {Bachelor  thesis accepted by the University of Wroc{\l}aw,
              Wroc{\l}aw, Poland},
 
      YEAR = {2022},
     PAGES = {1--19},     
    }

@article{BARRIGAACOSTA20191,
title = {c-Many types of a $\Psi$-space},
journal = {Topology Appl.},
volume = {253},
pages = {1-6},
year = {2019},
author = {Hector Alonzo Barriga-Acosta and Fernando Hernández-Hernández},

}

@article{CABELLOSANCHEZ2020108571,
title = {Sailing over three problems of {K}oszmider},
journal = {J.\ Funct.\ Anal.},
volume = {279},
number = {4},
pages = {108571},
year = {2020},

author = {Félix {Cabello Sánchez} and Jesús M.F. Castillo and Witold Marciszewski and Grzegorz Plebanek and Alberto Salguero-Alarcón},

}

@article{JLoriginal,
  author = {W.B. Johnson and J. Lindenstrauss },
  year = {1974},
  title = {Some remarks on weakly compactly generated {B}anach spaces},
  journal={Israel J. Math.},
  volume={17},
  pages={219–230},
  year={1974},
}

@article{YostOriginal,
  title={The {J}ohnson-{L}indenstrauss space},
  author={David Yost},
  journal={Extracta Math.},
  volume={12},
  number={2},
  pages={185-192},
  year={1997},
}

@article{Avilésarticle,
author = {Avilés, Antonio and Martínez-Cervantes, Gonzalo and Rodríguez, José},
year = {2019},
month = {09},
pages = {},
title = {Weak⁎-sequential properties of {J}ohnson–{L}indenstrauss spaces},
volume = {276},
journal = {J.\ Funct.\ Anal.},
doi = {10.1016/j.jfa.2018.09.007}
}

@incollection{HHHsurvey,
 author = {Hern{\'a}ndez-Hern{\'a}ndez, F. and Hru{\v{s}}{\'a}k, M.},
 title = {Topology of {Mr{\'o}wka}-{Isbell} spaces},
 booktitle = {Pseudocompact topological spaces. A survey of classic and new results with open problems},
 isbn = {978-3-319-91679-8; 978-3-319-91680-4},
 pages = {253--289},
 year = {2018},
 publisher = {Cham: Springer},
 language = {English},
 doi = {10.1007/978-3-319-91680-4_8},
 zbMATH = {7121778},
 Zbl = {1447.54006}
}
\end{document}